\newtheorem{thm}{Theorem}[section]
\newtheorem{prop}[thm]{Proposition}
\newtheorem{lem}[thm]{Lemma}
\newtheorem{conj}[thm]{Conjecture}
\begin{document}

\title{Integral finite surgeries on knots in $S^3$}

\author{Liling GU}
\address{Department of Mathematics, Caltech,
1200 E California Blvd, Pasadena, CA 91125}
\email{gll007$@$caltech.edu}
\maketitle

\begin{abstract}
Using the correction terms in Heegaard Floer homology, we prove that if a knot in $S^3$ admits a positive integral $\mathbf{T}$-,  $\mathbf{O}$- or  $\mathbf{I}$-type surgery,  it must have the same knot Floer homology as one of the knots given in our complete list, and the resulting manifold is orientation-preservingly homeomorphic to the $p$-surgery on the corresponding knot.
\end{abstract}

\section{Introduction}
In the early 1960s, Wallace \cite{Wallace} and Lickorish \cite{Lickorish} proved independently that any closed, orientable, connected 3-manifold can be obtained by performing Dehn surgery on a framed link in the 3-sphere. One natural question is which manifolds can be obtained by some surgery on a knot. In this paper we consider the manifolds with finite noncyclic fundamental groups. By Perelman's resolution of the Geometrization Conjecture\cite{P1, P2, P3}, the manifolds with finite fundamental group are spherical space forms. They fall into five classes, those with
cyclic $\pi_1$ and those with finite $\pi_1$ based on the four isometries of a sphere:

\begin{thm} (Seifert \cite{Seifert}). If $Y^3$ is closed, oriented and Seifert-fibered
with finite but noncyclic fundamental group, then it has base orbifold $S^2$ and
is one of:
\begin{enumerate}
\item
Type $\mathbf{D}$, dihedral:
$(b;\frac{1}{2},\frac{1}{2},\frac{a_3}{b_3})$
\item
Type $\mathbf{T}$, tetrahedral:
$(b;\frac{1}{2},\frac{a_2}{3},\frac{a_3}{3})$
\item
Type $\mathbf{O}$, octahedral:
$(b;\frac{1}{2},\frac{a_2}{3},\frac{a_3}{4})$
\item
Type $\mathbf{I}$, icosahedral:
$(b;\frac{1}{2},\frac{a_2}{3},\frac{a_3}{5})$
\end{enumerate}
with coefficients chosen so that $(a_i, b_i) =1$.
\end{thm}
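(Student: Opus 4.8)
The plan is to read off the structure of $\pi_1(Y)$ from the Seifert fibration and reduce the statement to the classification of closed $2$-orbifolds with finite orbifold fundamental group. Let $h \in \pi_1(Y)$ denote the class of a regular fiber. A Seifert fibration of $Y$ over its base orbifold $B$ yields an exact sequence
\[
1 \longrightarrow \langle h\rangle \longrightarrow \pi_1(Y) \longrightarrow \pi_1^{\mathrm{orb}}(B) \longrightarrow 1,
\]
where $\langle h\rangle$ is cyclic and normal (and central when $B$ is orientable, since then conjugation by a loop preserves the fiber orientation). First I would note that $\pi_1^{\mathrm{orb}}(B)$, as a quotient of the finite group $\pi_1(Y)$, is finite; moreover it must be noncyclic, since otherwise $B$ would carry at most two cone points, forcing $Y$ to be a lens space and $\pi_1(Y)$ cyclic, against hypothesis.

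The heart of the argument is the classification of closed $2$-orbifolds with finite, noncyclic orbifold group. Writing $\alpha_1,\dots,\alpha_n$ for the cone-point orders, finiteness of $\pi_1^{\mathrm{orb}}(B)$ is equivalent to positivity of the orbifold Euler characteristic
\[
\chi^{\mathrm{orb}}(B) = \chi(|B|) - \sum_{i=1}^{n}\Bigl(1 - \tfrac{1}{\alpha_i}\Bigr) > 0.
\]
Since each summand is nonnegative, this forces $\chi(|B|) \ge 1$, so $|B|$ is $S^2$ or $\mathbb{RP}^2$. I would dispatch the nonorientable case by checking that the positive-characteristic orbifolds supported on $\mathbb{RP}^2$ have cyclic orbifold group (equivalently, that the corresponding manifolds re-fiber over $S^2$ in dihedral form), so that up to choice of fibration we may take $|B| = S^2$. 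With $|B| = S^2$ the inequality becomes $\sum(1 - 1/\alpha_i) < 2$; ruling out $n \le 2$ (which again yields cyclic groups) leaves $n = 3$, and solving $\tfrac{1}{\alpha_1}+\tfrac{1}{\alpha_2}+\tfrac{1}{\alpha_3} > 1$ in integers $\ge 2$ gives exactly $(2,2,m)$, $(2,3,3)$, $(2,3,4)$, and $(2,3,5)$.

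It remains to match these data to the four types. I would identify the orbifold groups of the triples above with the finite subgroups of $SO(3)$ — dihedral, tetrahedral $A_4$, octahedral $S_4$, and icosahedral $A_5$ — giving $\mathbf{D}$, $\mathbf{T}$, $\mathbf{O}$, $\mathbf{I}$ respectively, and then record the cone-point orders as the denominators of the Seifert invariants, normalizing to the stated forms $(b;\tfrac12,\tfrac12,\tfrac{a_3}{b_3})$, $(b;\tfrac12,\tfrac{a_2}{3},\tfrac{a_3}{3})$, $(b;\tfrac12,\tfrac{a_2}{3},\tfrac{a_3}{4})$, $(b;\tfrac12,\tfrac{a_2}{3},\tfrac{a_3}{5})$ with each $(a_i,b_i)=1$.

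I expect the main obstacle to be the orbifold bookkeeping rather than any deep input: verifying that the bad orbifolds (teardrops $S^2(\alpha)$ and asymmetric spindles $S^2(\alpha,\beta)$) and the nonorientable positive-characteristic orbifolds carry cyclic orbifold group, so that they can be discarded as lens-space or dihedral re-fiberings; and confirming that the extension above genuinely transmits noncyclicity between $\pi_1(Y)$ and $\pi_1^{\mathrm{orb}}(B)$. Fixing the precise coefficient conventions — the leading $\tfrac12$'s and the coprimality $(a_i,b_i)=1$ — is routine but is where the normal form has to be reconciled with the geometry.
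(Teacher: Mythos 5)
The paper offers no proof of this statement at all: it is quoted as a classical theorem of Seifert with a citation to the 1933 paper, so there is no argument of the author's to compare yours against. Your sketch is the standard modern route --- the fiber exact sequence $1 \to \langle h\rangle \to \pi_1(Y) \to \pi_1^{\mathrm{orb}}(B) \to 1$, finiteness of the base group, positivity of $\chi^{\mathrm{orb}}$, and the enumeration of $(2,2,m)$, $(2,3,3)$, $(2,3,4)$, $(2,3,5)$ --- and in outline it is sound; this is essentially how the result is proved in modern references on Seifert fibered spaces.

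One intermediate claim is false as literally stated and you should not lean on it: ``$\pi_1^{\mathrm{orb}}(B)$ must be noncyclic, since otherwise $B$ would carry at most two cone points, forcing $Y$ to be a lens space.'' This fails when $|B| = \mathbb{RP}^2$: a prism manifold fibers over $\mathbb{RP}^2$ or $\mathbb{RP}^2(n)$, whose orbifold groups are cyclic (of order $2$ and $2n$), yet $\pi_1(Y)$ is binary dihedral, hence noncyclic. Your second paragraph effectively repairs this by treating the $\mathbb{RP}^2$-based positive orbifolds separately and invoking the re-fibering of these manifolds over $S^2(2,2,n)$, but the first paragraph's deduction is only valid once you have already reduced to $|B| = S^2$; the order of the two steps should be swapped. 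Relatedly, the theorem must be read as asserting that $Y$ \emph{admits} a fibration of the listed form (prism manifolds carry two inequivalent Seifert fibrations, only one of which has base $S^2$), which is the reading your argument ultimately delivers. With that reordering, and the routine verification that the bad orbifolds and two-cone-point spindles yield only lens spaces, your proof goes through.
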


Berge \cite{Berge} constructed a list of knots which yield lens space surgeries, and he conjectured that it is complete. Greene \cite{Greene} proved that if a $p$-surgery along a knot produces a lens space $L(p, q)$, there exists a $p$-surgery along a Berge knot with the same knot Floer homology groups as $L(p,q)$. In this paper we focus on knots with finite noncyclic surgeries. According to Thurston \cite{Thurston}, a knot is either a torus knot, a hyperbolic knot, or a satellite knot. Moser \cite{Moser} classified all finite surgeries on torus knots, and Bleiler and Hodgson \cite{Bleiler_Hodgson} classified all finite surgeries on cables of torus knots. Boyer and Zhang \cite{Boyer_Zhang} showed that a satellite knot with a finite noncyclic surgery must be a cable of some torus knot. There is also some progress about hyperbolic knots. Doig\cite{Doig} proved that there are only finitely many spherical space forms which come from a $p/q$-surgery on $S^3$ for a fixed integer $p$. From now on we will only consider Dehn surgeries on hyperbolic knots. 

Suppose $M$ is a 3-manifold with torus boundary, $\alpha$ is a slope on $\partial M$.
Let $M(\alpha)$ be the Dehn filling along $\alpha$. If $M$ is hyperbolic, Thurston's Hyperbolic Dehn Surgery Theorem says that at most finitely many of the fillings are nonhyperbolic. These surgeries are called $exceptional$ $surgeries$.  In \cite{Boyer_Zhang}, Boyer and Zhang showed that if M is hyperbolic, $M(\alpha)$ has a finite
fundamental group and $M(\beta)$ has a cyclic fundamental group, then $|\Delta(\alpha, \beta)| \leq 2$. In particular, if the $p/q$-surgery on a hyperbolic knot $K \subset S^3$, denoted $S^3_K(p/q )$, has a finite fundamental
group, then $|q| \leq 2$. For $|q|=2$, Li and Ni \cite{Li} prove that $K$ has the same
knot Floer homology as either $T(5, 2)$ or a cable of a torus knot (which must be
$T(3, 2)$ or $T(5, 2)$). From now on we will only consider integral surgeries on hyperbolic knots.

Taking the mirror image of a knot $K$ if necessary, we may assume $p>0$. We consider here all $\mathbf{T}$-,  $\mathbf{O}$- and  $\mathbf{I}$-type spherical space forms. In this paper, all manifolds are oriented. If $Y$ is an oriented manifold, then $-Y$ denotes the same manifold with the opposite orientation.
Let $\mathbb{T}$ be the exterior of the right-hand trefoil, then $\mathbb{T}(p/q)$ is the manifold obtained by $p/q$-surgery on the right hand trefoil. It is well-known that any $\mathbf{T}$- $\mathbf{O}$-, or
$\mathbf{I}$-type manifold is homeomorphic to a $\pm \mathbb{T}(p/q)$ (see Lemma \ref{Lemma}).

Our main result is the following theorem.
\begin{thm}
Suppose that $K$ is a knot in $S^3$, and that the $p$-surgery
on K is a $\mathbf{T}$-,  $\mathbf{O}$- or  $\mathbf{I}$-type spherical space form for some integer $p > 0$, then K has the same
knot Floer homology as one of the knots $\widetilde K$ in table \ref{table: torus knots and satellite knots} or table \ref{table: hyperbolic knots}, and the resulting manifold is orientation-preservingly homeomorphic to the $p$-surgery on the corresponding knot $\widetilde K$.
\label{thm}
\end{thm}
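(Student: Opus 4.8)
The plan is to use the correction terms (the $d$-invariants) in Heegaard Floer homology to pin down the knot Floer homology of $K$ and to match the surgery with one on a model knot $\widetilde K$. The overall strategy is a finite-search argument: reduce the infinitely many potential $\mathbf{T}$-, $\mathbf{O}$-, $\mathbf{I}$-type targets to a manageable list of constraints, then eliminate all but finitely many candidates.

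\medskip

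\noindent\textbf{Step 1: Bounding the surgery coefficient.}
First I would exploit the classification of finite Seifert-fibered spaces as $\pm\mathbb{T}(p/q)$ (Lemma \ref{Lemma}) to compute the order $|H_1|=p$ of the fundamental group explicitly for each type. Since a positive integral surgery $S^3_K(p)$ produces such a space form, this fixes $p$ to lie in an arithmetic family determined by the orbifold data $(b;\tfrac12,\tfrac{a_2}{3},\tfrac{a_3}{m})$ with $m\in\{3,4,5\}$. The key input here is that $S^3_K(p)$ is an $L$-space: a spherical space form is an elliptic manifold and hence (by the Heegaard Floer computations for prism and related spaces, building on the lens-space case) has the simplest possible $\widehat{HF}$, so that Ozsv\'ath--Szab\'o's surgery constraints apply.

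\medskip

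\noindent\textbf{Step 2: The $d$-invariant mismatch.}
The heart of the argument is the comparison of correction terms. For an integral $L$-space surgery, Ozsv\'ath--Szab\'o express $d(S^3_K(p),i)$ in terms of the torsion coefficients $t_s(K)$ of the Alexander polynomial (equivalently, the integers $V_s=V_s(K)$ coming from the knot's $CFK^\infty$), via the formula $d(S^3_K(p),i)=d(S^3_0(p),i)-2V_{\min(i,p-i)}$ for an appropriate identification of $\mathrm{Spin}^c$ structures. On the other side, the same manifold is $\pm\mathbb{T}(p/q)$, whose correction terms I can compute directly from the trefoil's known $CFK^\infty$ and the surgery/filling formulas. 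Equating the two sets of $d$-invariants forces a system of equations on the $V_s$. Because the $V_s$ form a nonincreasing, nonnegative sequence that stabilizes to $0$, and because the genus bound $2g(K)-1\le p$ (from the fact that the surgery is an $L$-space surgery) limits how many $V_s$ can be nonzero, this system admits only finitely many integer solutions for each $p$. Each solution determines the full knot Floer homology of $K$, since for $L$-space knots $\widehat{HFK}$ is determined by the Alexander polynomial, which in turn is determined by the $V_s$ (or the $t_s$).

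\medskip

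\noindent\textbf{Step 3: Matching to the model knots and orientation.}
Having produced a finite candidate list of Alexander polynomials / $V_s$-sequences for each admissible $p$, I would carry out the (computer-assisted) enumeration and match each surviving candidate against a known knot $\widetilde K$ admitting the corresponding finite surgery, recording these in Tables \ref{table: torus knots and satellite knots} and \ref{table: hyperbolic knots}. Finally, to upgrade ``same knot Floer homology'' to ``orientation-preserving homeomorphism of the surgered manifold,'' I would observe that two $L$-space surgeries with matching $d$-invariants on the same $p$ give the same spherical space form up to orientation; since both are of the same $\mathbf{T}$-, $\mathbf{O}$-, or $\mathbf{I}$-type and share the sign of $p>0$, the Seifert invariants coincide and the homeomorphism is orientation-preserving.

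\medskip

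\noindent The main obstacle I anticipate is \textbf{Step 2}, specifically controlling the finiteness and completeness of the solution set: for large $p$ the system of $d$-invariant equations becomes delicate, and ruling out spurious solutions (sequences $V_s$ that satisfy all $d$-invariant constraints but do not arise from any genuine knot, or that would force $p$ outside the admissible range) requires careful use of the monotonicity and convexity properties of the torsion coefficients together with the Casson--Walker invariant or an auxiliary parity/symmetry condition on the $t_s(K)$. Verifying that the finitely many surviving cases are \emph{exactly} those in the tables — with no omissions — is where the bulk of the technical work lies.
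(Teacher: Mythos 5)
Your overall framework --- identify every $\mathbf{T}$-, $\mathbf{O}$-, $\mathbf{I}$-type target as $\pm\mathbb{T}(p/q)$ with $p=6q\pm r$, $r\in\{3,4,5\}$, use the $L$-space surgery formula $d(S^3_K(p),i)=d(L(p,1),i)-2t_{\min(i,p-i)}$, and compare against the directly computable correction terms of $\mathbb{T}(p/q)$ --- is exactly the paper's strategy. But there is a genuine gap at the step you yourself flag as the main obstacle: you never give an argument that only finitely many $p$ can occur. Your finiteness claim in Step 2 (``finitely many integer solutions for each $p$'') is fine but irrelevant to the real problem, since the admissible $p$ range over an infinite arithmetic family as $q$ varies; the genus bound $2g(K)-1\le p$ bounds the number of nonzero $t_s$ in terms of $p$, not $p$ itself. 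The paper's Proposition 4.1 is the missing ingredient: for $p>310r(36r+1)^2$ no match is possible. Its proof examines the differences $\Delta^{\varepsilon}_{a,\theta}(6k)-\Delta^{\varepsilon}_{a,\theta}(6k+1)$, which by the monotonicity of the $t_s$ must lie in $\{0,2\}$ for every admissible $k$, and shows they have the form $Ak+B+C_k$ with $A\neq 0$ (an irrationality argument rules out $A=0$) and with $C_k$ taking at most $3r$ values; a nonconstant affine function plus a boundedly-valued perturbation cannot stay in $\{0,2\}$ for $k=0,1,\dots,6r$, and for $p$ large enough that many $k$ are admissible. Neither the Casson--Walker invariant nor a parity condition alone, as you suggest, is known to close this.

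A second, smaller omission: you ``equate the two sets of $d$-invariants'' as if the $\mathrm{Spin}^c$ parametrizations matched. They need only agree up to an affine isomorphism $\phi_{a,b}$ of $\mathbb{Z}/p\mathbb{Z}$, so the comparison must be run over all $a$ with $(a,p)=1$, $0<a<p/2$, and the admissible $b$. The paper devotes Lemmas 3.3, 4.3 and 4.4 to pinning down $b$ (via compatibility with the conjugation $J$ and a parity computation in the $\mathbf{O}$ case) and to confining $a$ to a window $|a-mp/6|<\sqrt{11rp/6}$; without this control the large-$p$ estimate above cannot even be set up. Your Step 3 (matching candidates to model knots and checking the homeomorphism is orientation-preserving) is consistent with what the paper does once the finite list of $p/q$ is in hand.
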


The strategy of our proof is to compute the Heegaard Floer correction terms
for the $\mathbf{T}$-,  $\mathbf{O}$- and $\mathbf{I}$-type manifolds, and to compare them with the correction terms
of the integral surgeries on knots in $S^3$. If they match, the knot Floer
homology of the knots can be recovered from the correction terms.
Knot Floer homology contains a lot of information about knots. For example,
it detects the genus  \cite{Genus} and fiberedness \cite{Ni}. We propose the following conjecture.

\begin{conj}
Suppose that $K$ is a hyperbolic knot in $S^3$, and that the $p$-surgery
on K is a $\mathbf{T}$-,  $\mathbf{O}$- or  $\mathbf{I}$-type spherical space form for some integer $p > 0$, then K is one of the knots in table \ref{table: hyperbolic knots}.
\end{conj}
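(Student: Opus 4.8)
The plan is to follow the template Greene used for lens-space surgeries, replacing the lens-space target by the Seifert model $\pm\mathbb{T}(p'/q')$ supplied by Lemma \ref{Lemma}. First I would record the two structural facts that drive everything. On the one hand, any $\mathbf{T}$-, $\mathbf{O}$- or $\mathbf{I}$-type space form is an elliptic manifold and hence an L-space, so the hypothesis says that $S^3_K(p)$ is an L-space with $p>0$; by the characterization of positive L-space surgeries this forces $K$ to be an L-space knot with $p\ge 2g(K)-1$. For such a knot $\widehat{HFK}(K)$ is completely determined by the Alexander polynomial $\Delta_K(t)=\sum_i(-1)^i t^{n_i}$, so ``recovering the knot Floer homology'' is equivalent to ``recovering the torsion coefficients,'' i.e. the nonincreasing sequence $V_0\ge V_1\ge\cdots\ge0$ with $V_i-V_{i+1}\in\{0,1\}$ extracted from $CFK^\infty(K)$. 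On the other hand, $H_1(S^3_K(p))\cong \mathbb{Z}/p\mathbb{Z}$ pins $p$ to the order of $H_1$ of the model, which, via the trefoil surgery classification behind Lemma \ref{Lemma}, restricts $p$ to the arithmetic progressions coming from $|6q'-p'|\in\{3,4,5\}$.

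The computational heart is a comparison of correction terms. For the integral surgery I would use the Ozsv\'ath--Szab\'o/Ni--Wu surgery formula, which, after a suitable identification of $\mathrm{Spin}^c$ structures, reads
\[
 d\bigl(S^3_K(p),i\bigr)=d\bigl(L(p,1),i\bigr)-2V_{\min\{i,\,p-i\}},\qquad 0\le i\le p-1,
\]
with $d(L(p,1),i)$ computed from the standard recursion. For the model I would compute $d(\pm\mathbb{T}(p'/q'),\cdot)$ directly from the rational surgery formula applied to the right-hand trefoil, whose $V$-sequence is simply $V_0=1$ and $V_i=0$ for $i\ge1$; the sign is handled by $d(-Y,\mathfrak{s})=-d(Y,\mathfrak{s})$. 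Since the hypothesis asserts $S^3_K(p)$ is homeomorphic to this model, the two multisets of correction terms must agree, and equating them channel by channel yields an explicit linear system for the unknowns $V_{\min\{i,p-i\}}$.

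Solving that system recovers the full $V$-sequence, hence $\Delta_K$ and hence $\widehat{HFK}(K)$; the bookkeeping that records which orientation of the model is matched is exactly what upgrades the conclusion to an orientation-preserving homeomorphism with the corresponding $p$-surgery on $\widetilde K$. The last step is to confront each admissible $V$-sequence with the candidate list: the generic solutions are realized by torus knots and cables of torus knots (Moser, Bleiler--Hodgson), accounting for the infinite families in Table \ref{table: torus knots and satellite knots}, while a finite residue of sporadic Alexander polynomials remains and is matched against the hyperbolic entries of Table \ref{table: hyperbolic knots}.

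I expect the main obstacle to be precisely this endgame: carrying out the correction-term matching uniformly across the unbounded arithmetic progressions of admissible $p$, keeping the $\mathrm{Spin}^c$ labeling and the orientation sign consistent throughout, and then proving that the resulting constraints on $(V_i)$ admit no solutions beyond those realized by the tabulated knots. Ruling out spurious $V$-sequences---showing that the genus is bounded in the sporadic range and that every surviving solution is actually realized with the correct oriented surgery---is where the real work, and the dependence on fine properties of L-space knots, will concentrate.
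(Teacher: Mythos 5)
There is a fundamental gap here: the statement you are asked to prove is the paper's \emph{Conjecture}, not its main theorem, and the method you outline cannot prove it. Everything in your proposal --- the L-space surgery formula, the matching of correction terms against $d(\pm\mathbb{T}(p'/q'),\cdot)$, the recovery of the torsion coefficients $V_i$ (the paper's $t_i$), and hence of $\Delta_K$ and $\widehat{HFK}(K)$ --- is precisely the strategy the paper uses to prove Theorem \ref{thm}, which concludes only that $K$ has the \emph{same knot Floer homology} as one of the tabulated knots. The Conjecture asserts the strictly stronger statement that a hyperbolic $K$ with such a surgery \emph{is} one of the knots in Table \ref{table: hyperbolic knots}, i.e.\ is isotopic to it. Correction terms are computed from $\mathfrak{s}$-decorated Heegaard Floer data of the surgered manifold and can never distinguish two knots whose knot Floer homologies (indeed, whose $CFK^\infty$ filtered chain homotopy types) agree; since knot Floer homology is not a complete knot invariant, no refinement of the channel-by-channel matching of $d$-invariants closes this gap. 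That is exactly why the paper leaves this as a conjecture rather than a theorem.

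Your final paragraph correctly flags the bookkeeping difficulties in the correction-term endgame, but misses that even a perfect execution of that endgame lands you at Theorem \ref{thm}, not the Conjecture. To actually prove the Conjecture one would need an additional geometric input --- for instance, a classification of hyperbolic knots admitting finite fillings (in the spirit of the Berge conjecture for lens spaces), or a rigidity statement showing that within the relevant class the surgery data determines the knot complement and then invoking knot complement rigidity. Nothing of that kind appears in your outline, and it is the entire content of the open problem. (A smaller point: the Conjecture concerns only hyperbolic $K$, so the torus-knot and cable cases you invoke via Moser and Bleiler--Hodgson are not part of this statement at all.)
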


\subsection*{Acknowledgments} I wish to thank my advisor, 
Yi Ni, for his patience and instructions during this work. I would also like to thank John Berge for kindly sending me his papers, and Xingru Zhang for pointing out that I may use Dean's twist torus knots.

\section{Preliminaries on Heegaard Floer homology and
correction terms\label{sec}}

Heegaard Floer homology was introduced by Ozsv$\acute{\text{a}}$th and Szab$\acute{\text{o}}$ \cite{HeegaardFloer}. Given
a closed oriented 3-manifold Y and a Spin$^c$ structure $\mathfrak{s} \in$ Spin$^c(Y )$, one can
define the Heegaard Floer homology $\widehat{HF}(Y, \mathfrak{s}),HF^+(Y, \mathfrak{s} )$, . . . , which are invariants of $(Y, \mathfrak{s} )$. When $\mathfrak{s} $ is torsion, there is an absolute $\mathbb{Q}$-grading on $HF^+(Y, \mathfrak{s} )$.
When $Y$ is a rational homology sphere,  Ozsv$\acute{\text{a}}$th and Szab$\acute{\text{o}}$ \cite{FourManifold} defined a $correction$
$term$ $d(Y, \mathfrak{s}) \in \mathbb{Q}$, which is the shifting of the absolute grading of $HF^+(Y, \mathfrak{s})$ relative to $HF^+(S^3, 0)$.

The correction terms have the following symmetries:
\begin{equation}
d(Y, \mathfrak{s}) = d(Y, J\mathfrak{s}), \quad d(-Y, \mathfrak{s}) = -d(Y, \mathfrak{s}),
\label{conjugation}
\end{equation}
where $J$ : Spin$^c(Y )$ $\to$ Spin$^c(Y )$ is the conjugation.

Suppose that $Y$ is an integral homology sphere, $K \subset Y$ is a knot. Let
$Y_K(p/q)$ be the manifold obtained by $p/q$-surgery on $K$. Ozsv$\acute{\text{a}}$th and Szab$\acute{\text{o}}$
defined a natural identification $\sigma: \mathbb{Z}/p \mathbb{Z} \to$ Spin$^c(Y_K(p/q))$ \cite{FourManifold, Rational}. For simplicity,
we often use an integer $i$ to denote the Spin$^c$ structure $\sigma([i])$, when
$[i] \in \mathbb{Z}/p \mathbb{Z}$ is the congruence class of $i$ modulo $p$.

A rational homology sphere $Y$ is an $L$-$space$ if rank$\widehat{HF}(Y ) = |H_1(Y )|$. Examples of L-spaces include spherical space forms. The information about the
Heegaard Floer homology of an L-space is completely encoded in its correction
terms.

Let $L(p, q)$ be the lens space obtained by $p/q$-surgery on the unknot. The
correction terms for lens spaces can be computed inductively as follows:
$$
d(S^3,0)=0,
$$
\begin{equation}
d(-L(p,q), i)=\frac{1}{4}-\frac{(2i+1-p-q)^2}{4pq}-d(-L(q,r),j)
\label{recursion}
\end{equation}
where $0 \leq i < p+q$, $r$ and $j$ are the reductions of $p$ and $i$ modulo $q$, respectively.

For example, using the recursion formula (\ref{recursion}), we get
\begin{align}
&d(L(3,q),i)= \left\{ 
  \begin{array}{l l}
    (\frac{1}{2}, -\frac{1}{6}, -\frac{1}{6}) & \quad \text{$q=1, i = 0, 1, 2$}\\
    (\frac{1}{6}, \frac{1}{6}, -\frac{1}{2}) & \quad \text{$q=2, i = 0, 1, 2$}\\
  \end{array} \right.\\
&d(L(4,q),i)= \left\{ 
  \begin{array}{l l}
    (\frac{3}{4}, 0, -\frac{1}{4}, 0) & \quad \text{$q=1, i = 0, 1, 2, 3$}\\
    (0, \frac{1}{4}, 0, -\frac{3}{4}) & \quad \text{$q=3, i = 0, 1, 2, 3$}\\
  \end{array} \right.\\
&d(L(5,q),i)= \left\{ 
  \begin{array}{l l}
    (1, \frac{1}{5}, -\frac{1}{5}, -\frac{1}{5}, \frac{1}{5}) & \quad \text{$q=1, i = 0, 1, 2,3,4$}\\
    (\frac{2}{5}, \frac{2}{5}, -\frac{2}{5}, 0, -\frac{2}{5}) & \quad \text{$q=2, i = 0, 1, 2,3,4$}\\
   (\frac{2}{5}, 0, \frac{2}{5}, -\frac{2}{5}, -\frac{2}{5}) & \quad \text{$q=3, i = 0, 1, 2,3,4$}\\
   (-\frac{1}{5}, \frac{1}{5}, \frac{1}{5}, -\frac{1}{5},-1) & \quad \text{$q=4, i = 0, 1, 2,3,4$}\\
  \end{array} \right.
\label{345}
\end{align}

Given a null-homologous knot $K \subset Y$ , Ozsv$\acute{\text{a}}$th and Szab$\acute{\text{o}}$ \cite{KnotFloer} and Rasmussen
\cite{Rasmussen} defined the knot Floer homology. From \cite{Rational}, if we know the knot Floer homology, then we can compute the
Heegaard Floer homology of all the surgeries on K. In particular, if the $p/q$-surgery on $K \subset S^3$ is an L-space surgery, where $p, q > 0$, then the correction
terms of $S^3_K(p/q)$ can be computed from the Alexander polynomial of $K$ as follows.

Suppose
$$
\Delta_K(T)=a_0+\sum_{i>0} a_i (T^i+T^{-i})
$$
Define a sequence of integers
$$
t_i =\sum_{j=1}^\infty ja_{i+j} ,\quad  i \geq 0.
$$
then $a_i$ can be recovered from $t_i$ by
\begin{equation}
a_i = t_{i-1} - 2t_i + t_{i+1}, \quad \text{for} \  i > 0.
\end{equation}
If $K$ admits an L-space surgery, then one can prove\cite{Rational, Rasmussen}
\begin{equation}
t_s \geq 0,  \quad t_s \geq t_{s+1} \geq t_s-1, \quad t_{g(K)}=0.
\label{ts_nonincreasing}
\end{equation}
Moreover, the following proposition holds.

\begin{prop}
Suppose the $p/q$-surgery on $K\subset S^3$ is an L-space surgery, where $p,q>0$. Then for any $0\leq i < p$ we have
$$
d(S^3_K(p/q),i)=d(L(p,q),i)-2\max\{t_{\lfloor\frac{i}{q}\rfloor}, t_{\lfloor\frac{p+q-1-i}{q}\rfloor}\}.
$$
\label{prop}
\end{prop}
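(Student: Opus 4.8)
The plan is to deduce this from the rational surgery mapping cone formula of Ozsv\'ath--Szab\'o, combined with the especially rigid structure of the knot Floer complex of an L-space knot. First I would recall the mapping cone. For $p/q>0$ and a fixed Spin$^c$ structure $i$, the group $HF^+(S^3_K(p/q),i)$ is computed as the homology of the mapping cone $\mathbb{X}^+_i$ of a map $D^+:\mathbb{A}^+_i\to\mathbb{B}^+_i$, where $\mathbb{A}^+_i=\bigoplus_{s\equiv i}A^+_{\lfloor s/q\rfloor}$ and $\mathbb{B}^+_i=\bigoplus_{s\equiv i}B^+$ with $B^+=\mathcal{T}^+$, and $D^+$ is assembled from the vertical maps $v^+_s$ and horizontal maps $h^+_s$ (each element of $A^+_{\lfloor s/q\rfloor}$ maps by $v^+$ into the $B^+$ at the same position and by $h^+$ into the $B^+$ shifted by $p$). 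Here $A^+_s=H_*(C\{\max(i,j-s)\geq 0\})$ records the large-surgery Floer homology in the $s$-th Alexander grading, and for $|s|$ large $v^+_s$ (resp.\ $h^+_s$) is an isomorphism, so the a priori infinite zig-zag truncates to a finite complex.

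Next I would use the L-space hypothesis to simplify. Since some $p/q$-surgery is an L-space surgery, $K$ is an L-space knot, so each $A^+_s\cong\mathcal{T}^+$ and, up to an overall grading shift, the maps $v^+_s$ and $h^+_s$ are multiplication by $U^{V_s}$ and $U^{H_s}$ for integers $V_s,H_s\geq 0$. The conjugation symmetry of knot Floer homology gives $H_s=V_{-s}$, while the staircase structure forces $V_s\geq V_{s+1}\geq V_s-1$ and $V_s=0$ for $s\geq g(K)$ --- exactly the properties recorded for $t_s$ in (\ref{ts_nonincreasing}). I would then identify $V_s=t_s$: for an L-space knot the full complex $CFK^\infty$ is a staircase whose steps are read off from the nonzero coefficients of $\Delta_K$, and truncating to $C\{\max(i,j-s)\geq 0\}$ and reading off the $U$-power needed for $v^+_s$ to become an isomorphism on the tower yields precisely $V_s=\sum_{j\geq 1}j\,a_{s+j}=t_s$. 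Matching these two a priori different definitions is the step I would carry out carefully.

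Finally I would extract the correction term. Because the surgery is an L-space, $H_*(\mathbb{X}^+_i)$ is a single tower $\mathcal{T}^+$, and $d(S^3_K(p/q),i)$ is the grading of its bottom generator. Running the identical argument for the unknot, where all $V_s=H_s=0$, the cone computes $HF^+(L(p,q),i)$ and the same diagram chase fixes its bottom grading as $d(L(p,q),i)$. Comparing the two cones, the only difference is the $U^{V}$, $U^{H}$ weights, which lower the bottom grading of the surviving tower in Spin$^c$ structure $i$ by $2\max\{V_{\lfloor i/q\rfloor},H_{\lfloor(i-p)/q\rfloor}\}$. Using $H_s=V_{-s}$, the identity $-\lfloor(i-p)/q\rfloor=\lceil(p-i)/q\rceil=\lfloor(p+q-1-i)/q\rfloor$, and $V_s=t_s$, this gives the stated formula.

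The main obstacle is the grading bookkeeping inside the mapping cone: one must track the absolute $\mathbb{Q}$-gradings through the truncation of the zig-zag, identify exactly which $B^+$ summand carries the bottom of the surviving tower, and verify that the individual grading shifts combine into the clean expression $d(L(p,q),i)-2\max\{\cdots\}$. The accompanying index juggling --- checking the floor identity above and that $\lfloor i/q\rfloor$ indexes the correct vertical map --- is routine but must be done with care so that the range $0\leq i<p$ is handled uniformly.
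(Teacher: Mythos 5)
Your outline is correct: it is exactly the standard derivation via the rational surgery mapping cone that the paper delegates to Ozsv\'ath--Szab\'o \cite{Rational} and Rasmussen \cite{Rasmussen} (and which appears in this precise form, with $v^+_s=U^{V_s}$, $h^+_s=U^{H_s}$, $H_s=V_{-s}$, $V_s=t_s$, and the identity $-\lfloor (i-p)/q\rfloor=\lfloor (p+q-1-i)/q\rfloor$, in the literature on correction terms of rational surgeries). The paper itself offers no argument beyond the citation, so your sketch supplies the intended proof rather than an alternative to it.
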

This formula is contained in  Ozsv$\acute{\text{a}}$th and Szab$\acute{\text{o}}$ \cite{Rational} and Rasmussen \cite{Rasmussen}. 

\section{The strategy of our proof}
Recall that $\phi: \mathbb{Z}/p \mathbb{Z} \to$ Spin$^c(Y_K(p/q))$ is the natural identification defined by Ozsv$\acute{\text{a}}$th and Szab$\acute{\text{o}}$.
The following two lemmas are from \cite{Li}.

\begin{lem}
Suppose i is an integer satisfying $0 \leq i < p + q$, then $J(\sigma([i]))$ is
represented by $p + q - 1 - i$.
\label{lemma_J}
\end{lem}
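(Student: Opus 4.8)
The plan is to reduce the statement to the single fact that conjugation acts on $\mathrm{Spin}^c$ structures by negating the first Chern class, and then to pin down one additive constant. Recall that $\sigma$ is defined through the surgery cobordism picture: writing $W$ for the two-handle cobordism from $Y$ to $Y_K(p/q)$ (or, for general $q$, through Ozsv\'ath--Szab\'o's rational surgery identification), the structure $\sigma([i])$ is specified by the value of a first Chern class on a distinguished homology class. Since $H^2(Y_K(p/q);\mathbb Z)\cong\mathbb Z/p\mathbb Z$ acts freely and transitively on $\mathrm{Spin}^c(Y_K(p/q))$, and conjugation satisfies $J(\mathfrak s+a)=J(\mathfrak s)-a$, the composite $[i]\mapsto\sigma^{-1}(J\sigma([i]))$ is an affine involution of $\mathbb Z/p\mathbb Z$. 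Hence $J(\sigma([i]))=\sigma([c-i])$ for a single constant $c\in\mathbb Z/p\mathbb Z$ independent of $i$, and the whole lemma comes down to showing $c\equiv p+q-1\pmod p$.

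To compute $c$ I would first treat the integral case $q=1$, where the bookkeeping is transparent. Here $W$ is obtained by attaching a single two-handle along $K$ with framing $p$, so $H_2(W;\mathbb Z)\cong\mathbb Z$ is generated by a class $[\hat F]$ (a Seifert surface capped off by the core of the handle) with $[\hat F]^2=p$, and $\sigma([i])$ is the restriction to $Y_K(p)$ of the $\mathrm{Spin}^c$ structure $\mathfrak t$ on $W$ with $\langle c_1(\mathfrak t),[\hat F]\rangle\equiv 2i-p\pmod{2p}$. Conjugation sends this to the structure with pairing $-(2i-p)=2(p-i)-p$, which is exactly the label $p-i=p+q-1-i$. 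This both confirms that $J$ is the claimed affine involution and fixes the constant when $q=1$.

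The remaining work, and the main obstacle, is to carry the same computation through the rational surgery identification for general $q$, where $\sigma$ is no longer read off from a single integral pairing but from the combinatorics of the mapping cone and the fractional framing; the task is to verify that the additive constant produced there is the representative $p+q-1$. Two consistency checks guide and corroborate this bookkeeping. First, the quadratic term $(2i+1-p-q)^2$ in the lens space recursion (\ref{recursion}) is manifestly invariant under $i\mapsto p+q-1-i$, and the residual term transforms correctly under the inductive hypothesis, so the symmetry $d(Y,\mathfrak s)=d(Y,J\mathfrak s)$ of (\ref{conjugation}) is realized by precisely this involution for the unknot $S^3_U(p/q)=L(p,q)$, whose labeling carries the same affine structure (which depends only on $p,q$ and the framing data, not on the knot). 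Second, the formula of Proposition \ref{prop} pairs $t_{\lfloor i/q\rfloor}$ with $t_{\lfloor(p+q-1-i)/q\rfloor}$, so the identity $d(S^3_K(p/q),i)=d(S^3_K(p/q),p+q-1-i)$ holds automatically, exactly as forced by $J(\sigma([i]))=\sigma([p+q-1-i])$.
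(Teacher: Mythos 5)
The paper does not prove this lemma at all --- it is quoted from \cite{Li} --- so the only question is whether your argument stands on its own, and it does not quite. Your reduction is sound: since $\sigma$ intertwines the $H^2$-action with addition on $\mathbb{Z}/p\mathbb{Z}$ and $J(\mathfrak{s}+a)=J(\mathfrak{s})-a$, the composite $[i]\mapsto\sigma^{-1}(J\sigma([i]))$ is indeed $[i]\mapsto [c-i]$ for a single constant $c$, and your two-handle computation correctly yields $J(\sigma([i]))=\sigma([p-i])$ when $q=1$, which matches $p+q-1-i$ in that case. But the lemma is needed in this paper precisely for $q>1$: it is applied to $\mathbb{T}(p/q)$ in the proof of Lemma \ref{b} to force $b_j=\frac{jp+q-1}{2}$, and that is exactly the case you explicitly defer as ``the remaining work.'' The two consistency checks you offer in its place cannot close the gap. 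The invariance of the recursion \eqref{recursion} under $i\mapsto p+q-1-i$ only shows the correction terms are \emph{compatible} with the claimed involution; since $d$ is far from injective on $\mathrm{Spin}^c$ of a lens space (e.g.\ $d(L(3,1),1)=d(L(3,1),2)=-\frac{1}{6}$), no amount of $d$-invariant symmetry can determine the constant $c$. Likewise, the pairing of $t_{\lfloor i/q\rfloor}$ with $t_{\lfloor (p+q-1-i)/q\rfloor}$ in Proposition \ref{prop} is a \emph{consequence} of the lemma, not evidence for it.

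What is actually missing is the bookkeeping for fractional framings: one must unwind Ozsv\'ath--Szab\'o's labeling of $\mathrm{Spin}^c(Y_K(p/q))$ coming from the rational-surgery mapping cone (equivalently, from the two-component surgery presentation) and track conjugation through it, which is what \cite{Li} does. Alternatively, your parenthetical observation that the constant $c$ depends only on $p$, $q$ and the framing data --- not on the knot --- is a legitimate route: it would reduce the general case to computing $J$ on $\mathrm{Spin}^c(L(p,q))$ in the Ozsv\'ath--Szab\'o labeling for the unknot, e.g.\ from a genus-one Heegaard diagram or the linear plumbing. But that unknot computation is essentially the content of the lemma and is not supplied; as written, your argument establishes only the $q=1$ instance.
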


\begin{lem}
Any $\mathbf{T}$-type manifold is homeomorphic to $\pm \mathbb{T}( \frac{6q\pm3}{q} )$ for some positive integer $q$ with $(q, 3) = 1$.  Any  $\mathbf{O}$-type manifold is homeomorphic to
$\pm\mathbb{T}( \frac{6q\pm4}{q} )$ for some positive integer $q$ with $(q, 2) = 1$. Any  $\mathbf{I}$-type manifold is homeomorphic to
$\pm\mathbb{T}( \frac{6q\pm5}{q} )$ for some positive integer $q$ with $(q, 5) = 1$.
\label{Lemma}
\end{lem}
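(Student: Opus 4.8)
The plan is to identify $\mathbb{T}(p/q)$ explicitly as a Seifert fibered space and then match Seifert invariants against Seifert's classification (\cite{Seifert}, Theorem 1.1). The right-hand trefoil is the torus knot $T(2,3)$, whose exterior $\mathbb{T}$ is Seifert fibered over the disk with two exceptional fibers of multiplicities $2$ and $3$; on $\partial\mathbb{T}$ the regular fiber has slope $6 = 2\cdot 3$ in the meridian--longitude framing. By Moser's analysis of surgeries on torus knots \cite{Moser}, capping off along a slope $p/q \neq 6$ produces a closed Seifert fibered space over $S^2$ whose third exceptional fiber is the core of the filling solid torus, with multiplicity equal to the distance $\Delta(p/q,\,6) = |p - 6q|$. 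Thus, as soon as $|p-6q|\geq 2$, the manifold $\mathbb{T}(p/q)$ fibers over $S^2$ with three exceptional fibers of orders $2$, $3$ and $|p-6q|$.

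First I would match the base orbifolds. By Seifert's theorem a $\mathbf{T}$-, $\mathbf{O}$- or $\mathbf{I}$-type manifold fibers over $S^2$ with cone orders $(2,3,3)$, $(2,3,4)$ or $(2,3,5)$ respectively, so I need the third multiplicity of $\mathbb{T}(p/q)$ to equal $3$, $4$ or $5$; that is, $|p-6q| = 3,4,5$, which is exactly $p = 6q\pm 3,\ 6q\pm 4,\ 6q\pm 5$. The surgery coefficient must be reduced, $(p,q)=1$, and since $(6q\pm k,\,q) = (k,\,q)$ this is equivalent to $(q,3)=1$, $(q,4)=1$ and $(q,5)=1$; the middle condition is the same as $(q,2)=1$, matching the stated hypotheses. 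Note that the two exceptional fibers of orders $2$ and $3$ coming from the trefoil can never produce the dihedral base orbifold $(2,2,n)$, which is why type $\mathbf{D}$ does not occur here.

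The substantive step is the converse: that every $\mathbf{T}$-, $\mathbf{O}$- or $\mathbf{I}$-type manifold is, in one of its two orientations, realized by such a surgery. Here I would compute the complete normalized Seifert invariants $(b;\,\beta_1/2,\,\beta_2/3,\,\beta_3/m)$ of $\mathbb{T}(p/q)$ from $p$ and $q$ and compare them with Seifert's normal form for the target. The order $|H_1(\mathbb{T}(p/q))| = p = |6q\pm m|$ equals $6m\,|e|$, where $e = -\bigl(b+\sum_i \beta_i/\alpha_i\bigr)$ is the rational Euler number, so $p$ pins down $|e|$, and the residues $\beta_i \bmod \alpha_i$ must then be matched against Seifert's coefficients $a_i$. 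I would show that, as $q$ ranges over the admissible residue classes and the two signs in $6q\pm m$ together with the global orientation reversal $\mathbb{T}(p/q)\mapsto -\mathbb{T}(p/q)$ are allowed to vary, every admissible normalized tuple is attained. I expect this last surjectivity---proving the realization map is onto, rather than merely landing in the correct family---to be the main obstacle; it comes down to solving linear congruences for the $\beta_i$, with orientation reversal absorbing the sign of $e$ and thereby delivering both orientations of each spherical space form.
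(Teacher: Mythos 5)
First, a point of reference: the paper does not actually prove this lemma --- it is imported from Li--Ni \cite{Li} with no argument given --- so there is no internal proof to compare against; your plan is essentially the standard argument lying behind that citation. The direction you do carry out is correct and complete: the trefoil exterior $\mathbb{T}$ is Seifert fibered over the disk with exceptional fibers of orders $2$ and $3$ and regular fiber of boundary slope $6$, so by Moser the filling $\mathbb{T}(p/q)$ with $|p-6q|\geq 2$ is Seifert fibered over $S^2$ with cone orders $(2,3,|p-6q|)$, and requiring the third order to be $3$, $4$ or $5$ together with $(p,q)=1$ produces exactly the three families in the statement. (One hypothesis you elide: to read the base orbifold off the closed manifold you need uniqueness of the Seifert fibration, which does hold for these space forms since they are neither lens spaces nor prism manifolds.)

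The substantive content, as you acknowledge, is the converse, and that is precisely the step you have not executed: ``solve linear congruences for the $\beta_i$ and show the realization map is onto'' is a program, not a proof, so as written the lemma is not established. Two comments on closing it. Matching full normalized Seifert invariants is more delicate than necessary (you must fix orientation conventions for $(b;\beta_i/\alpha_i)$ and track how orientation reversal acts on them). The efficient route is the classical fact (Threlfall--Seifert; see Orlik or Scott) that a $\mathbf{T}$-, $\mathbf{O}$- or $\mathbf{I}$-type space form is determined up to orientation-preserving homeomorphism by its fundamental group and its orientation, and that within each type the group is pinned down by $|H_1|$: the groups are $T^*_{8\cdot 3^k}\times\mathbb{Z}/m$ with $(m,6)=1$ and $|H_1|=3^k m$, $k\geq 1$; $O^*\times\mathbb{Z}/m$ with $(m,6)=1$ and $|H_1|=2m$; and $I^*\times\mathbb{Z}/m$ with $(m,30)=1$ and $|H_1|=m$. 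The converse then reduces to the elementary check that $\{6q\pm 3:(q,3)=1\}$, $\{6q\pm 4:(q,2)=1\}$ and $\{6q\pm 5:(q,5)=1\}$ exhaust exactly the odd multiples of $3$, the numbers $2m$ with $(m,6)=1$, and the numbers coprime to $30$, respectively, with the sign $\pm\mathbb{T}$ absorbing the two possible orientations. Note in particular the $\mathbf{T}$-type subtlety that $|H_1|$ may be divisible by $9$ (e.g.\ $\mathbb{T}(9)$), so $\pi_1$ is not always of the naive product form $\langle 2,3,3\rangle\times\mathbb{Z}/m$; any version of your Seifert-invariant bookkeeping must accommodate these groups as well.
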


Let $p, q>0$ be coprime integers. Using Proposition \ref{prop}, we get

\begin{equation}
 d( \mathbb{T}(p/q),i) = d(L(p,q), i) - 2 \chi_{[0,q)}(i),
\label{T32complement}
\end{equation}

where $ \chi_{[0,q)}(i) = \left\{ 
  \begin{array}{l l}
    1 & \quad \text{when  $0 \leq i<q$}\\
    0 & \quad \text{when  $q\leq i <p$}\\
  \end{array} \right.$  

Suppose $S^3_K(p)$ is a spherical space form, then by Proposition \ref{prop},

\begin{align*}
d(S^3_K(p),i)&=d(L(p,1), i)-2\max\{t_i, t_{p-i}\}\\
&=\frac{(2i-p)^2-p}{4p}-2 t_{\min(i, p-i)}
\end{align*}

If $S^3_K(p)\cong \varepsilon\mathbb{T}(p/q)$, $\varepsilon = \pm 1$, then the two sets
$$
\{ d(S^3_K(p),i)| i \in \mathbb{Z}/p\mathbb{Z}\}, \quad \{ \varepsilon d(\mathbb{T}(p/q),i)| i \in \mathbb{Z}/p\mathbb{Z}\}
$$
are equal. However, the two parametrizations of Spin$^c$ structures may differ by an affine isomorphism of $\mathbb{Z}/p\mathbb{Z}$. More precisely, there exists an affine isomorphism $\phi:  \mathbb{Z}/p\mathbb{Z} \to \mathbb{Z}/p\mathbb{Z}$, such that

$$
d(S^3_K(p),i)=\varepsilon d(\mathbb{T}(p/q),\phi(i)).
$$

 For any integers $a, b$, define $\phi_{a,b}:  \mathbb{Z}/p\mathbb{Z} \to \mathbb{Z}/p\mathbb{Z}$ by

$$\phi_{a,b}(i)=ai+b \mod p$$.

\begin{lem}
There are at most two values for $b$, $b_j=\frac{jp+q-1}{2}$, $j = 0, 1$.
\label{b}
\end{lem}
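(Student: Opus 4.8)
The plan is to pin down $b$ by exploiting the conjugation symmetry of the correction terms on both $S^3_K(p)$ and $\mathbb{T}(p/q)$, and tracking how that symmetry is transported by the affine map $\phi_{a,b}$. Since the surgery is integral we have $q=1$ on the surgery side, so Lemma \ref{lemma_J} tells us conjugation sends the Spin$^c$ structure $i$ to $p-i$; equivalently, the explicit formula for $d(S^3_K(p),i)$ is manifestly invariant under $i \mapsto p-i$, because both $(2i-p)^2$ and $\min(i,p-i)$ are. On the model side, Lemma \ref{lemma_J} gives that conjugation sends $j$ to $p+q-1-j$, so $d(\mathbb{T}(p/q),\cdot)$ is invariant under the reflection $j \mapsto (q-1)-j$ when we work modulo $p$.

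First I would substitute the defining relation $d(S^3_K(p),i)=\varepsilon\, d(\mathbb{T}(p/q),ai+b)$ into the identity $d(S^3_K(p),i)=d(S^3_K(p),p-i)$. Using $ap \equiv 0 \pmod p$, this yields $d(\mathbb{T}(p/q),ai+b)=d(\mathbb{T}(p/q),-ai+b)$ for every $i$; since $\gcd(a,p)=1$, writing $j=ai+b$ shows that $d(\mathbb{T}(p/q),\cdot)$ is also invariant under the reflection $j \mapsto 2b-j$. Thus $d(\mathbb{T}(p/q),\cdot)$ is fixed by two reflections, one being $j \mapsto (q-1)-j$ and the other $j \mapsto 2b-j$. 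Composing them produces a translation of $\mathbb{Z}/p\mathbb{Z}$ by $2b-(q-1)$ that preserves the function $d(\mathbb{T}(p/q),\cdot)$.

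The key step, and the one I expect to be the main obstacle, is to show that $d(\mathbb{T}(p/q),\cdot)$ admits no nontrivial translational symmetry, so that the translation above is forced to be trivial and hence $2b \equiv q-1 \pmod p$. I would establish this from the explicit description $d(\mathbb{T}(p/q),i)=d(L(p,q),i)-2\chi_{[0,q)}(i)$ in (\ref{T32complement}) together with the recursion (\ref{recursion}): the dominant quadratic term $-\frac{(2i+1-p-q)^2}{4pq}$ in the lens-space correction terms has a single center of symmetry, so a nonzero period would force coincidences among the $d$-values that this quadratic profile, corrected only by bounded terms and the indicator $\chi_{[0,q)}$, cannot support. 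Care is needed to handle the successive reductions appearing in the recursion and the small exceptional values of $p$ and $q$.

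Finally, once $2b \equiv q-1 \pmod p$ is known, the lemma reduces to elementary number theory: the integer solutions are precisely those $b$ of the form $\frac{q-1+jp}{2}$ with $q-1+jp$ even, and modulo $p$ these collapse to the two candidates $b_0=\frac{q-1}{2}$ and $b_1=\frac{p+q-1}{2}$. When $p$ is even (so $q$ is odd) both are integers and genuinely distinct mod $p$; when $p$ is odd exactly one of them is an integer. In either case there are at most two admissible values of $b$, which is the assertion of the lemma.
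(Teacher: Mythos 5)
Your overall skeleton is right---reduce to the congruence $2b \equiv q-1 \pmod p$ and then observe that it has at most two integer solutions modulo $p$, namely $b_j=\frac{jp+q-1}{2}$---and that final elementary step is exactly what the paper does. But the route you take to the congruence has a genuine gap, and it is a harder route than necessary. You use only the numerical identity $d(S^3_K(p),i)=\varepsilon\, d(\mathbb{T}(p/q),\phi(i))$ together with the symmetry $d(Y,\mathfrak{s})=d(Y,J\mathfrak{s})$ on each side; this gives two reflections preserving the function $d(\mathbb{T}(p/q),\cdot)$ on $\mathbb{Z}/p\mathbb{Z}$, whose composite is the translation by $2b-(q-1)$, and you then must show that this translation is trivial. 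That is precisely the step you flag as ``the main obstacle,'' and your sketch does not close it. When $p$ is prime the claim is easy (a nonconstant function on a prime cyclic group admits no nontrivial translational symmetry), but composite $p$ actually occurs here --- e.g.\ the $\mathbf{O}$-type case $p=6q\pm 4$ is even and the $\mathbf{T}$-type case $p = 6q \pm 3$ is divisible by $3$ --- and then the translation by $2b-(q-1)$ only generates the subgroup of order $p/\gcd(p,\,2b-q+1)$, so you must rule out every proper period of $d(\mathbb{T}(p/q),\cdot)$. The heuristic that ``the dominant quadratic term has a single center of symmetry'' is not a proof: the lens space correction terms are produced by the recursion (\ref{recursion}) with reductions modulo $q$, not by a literal quadratic, and turning the heuristic into an estimate valid for all relevant $p,q$ (including small ones) is a substantial piece of work your proposal does not carry out.

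The paper avoids all of this. The map $\phi$ is not merely an affine bijection matching the $d$-invariants: it is the map on Spin$^c$ structures induced by the homeomorphism $S^3_K(p)\cong\varepsilon\,\mathbb{T}(p/q)$, read through the two parametrizations $\sigma$. Conjugation of Spin$^c$ structures is natural under homeomorphisms, so $\phi J_{p}=J_{p/q}\phi$ holds on the nose. By Lemma \ref{lemma_J}, $J_p(i)=p-i$ and $J_{p/q}(j)=p+q-1-j$, so $a(p-i)+b\equiv p+q-1-(ai+b)\pmod p$ for all $i$, which is exactly $2b\equiv q-1\pmod p$, with no rigidity statement about the $d$-invariants needed. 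To repair your argument you should either prove the no-nontrivial-period claim in full, or replace it by this naturality of $J$ under the homeomorphism.
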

\begin{proof}
The affine isomorphism $\phi$ commutes with $J$, i.e., $\phi J_{p} = J_{\frac{p}{q}} \phi $. Using lemma \ref{lemma_J}, we get the desired values for $b$. Note that $b_0$ or $b_1$ may be a half-interger, in this case we discard it.
\end{proof}

Note $\phi_{a,b}(i) =\phi_{p-a,b}(p-i)$, By (\ref{conjugation}) and Lemma \ref{lemma_J}, $$d(\mathbb{T}(p/q), \phi_{a,b}(i)) = d(\mathbb{T}(p/q), \phi_{p-a,b}(p-i))= d(\mathbb{T}(p/q), \phi_{p-a,b}(i)).$$ 

So we may assume
\begin{equation}
0<a<\frac{p}{2}, \quad (p,a)=1.
\label{a}
\end{equation}

Then we may assume
$$
d(S^3_K(p),i)= \varepsilon d(\mathbb{T}(p/q),\phi_{a,b_j}(i)), \quad \text{for some $a$, any $i \in  \mathbb{Z}/p\mathbb{Z}$, and $j=0$ or 1.}
$$

Let
\begin{equation}
\Delta_{a,b_j}^\varepsilon(i)=d(L(p,1), i)-\varepsilon d(\mathbb{T}(p/q),\phi_{a,b_j}(i))
\label{delta_plus}
\end{equation}

By Proposition \ref{prop}, we should have
\begin{equation}
\Delta_{a,b_j}^\varepsilon(i)=2 t_{\min(i, p-i)}
\label{Delta}
\end{equation}
if $S^3_K(p)\cong \varepsilon \mathbb{T}(p/q)$ and $\phi_{a,b_j}$ identifies their Spin$^c$ structures.

In order to prove Theorem \ref{thm}, we will compute the correction terms of the
$\mathbf{T}$-,$\mathbf{O}$- and $\mathbf{I}$-type manifolds using (\ref{T32complement}). For all $a$ satisfying (\ref{a}), we compute the sequences $\Delta_{a,b_j}^\varepsilon(i)$. Then we check whether they satisfy (\ref{Delta})
for some $\{t_s\}$ as in (\ref{ts_nonincreasing}). We will show that (\ref{Delta}) cannot be satisfied when
$p$ is sufficiently large. For small $p$, a direct computation yields all the $p/q$'s. By a
standard argument in Heegaard Floer homology \cite{Lens}, we can get the knot Floer
homology of the corresponding knots, which should be the knot Floer homology
of either a $(p,q)$-torus knot ($(p,q)=(2,3), (2,5), (3,4), (3,5)$), a cable knot or some hyperbolic knot. We will list torus knots and cables of torus knots seperately for completeness, one may also consult Moser \cite{Moser} and Bleiler and Hodgson \cite{Bleiler_Hodgson}.

\section{The case when $p$ is large}

In this section, we will assume that $S^3_K(p)\cong
\varepsilon \mathbb{T}(p/q)$, and 

$$p = 6q + \zeta r, r\in \{3,4,5\}, \varepsilon, \zeta \in \{-1,1\}.$$

We will prove that this does not happen when p is sufficiently large.

\begin{prop}
If $p >310r(36r+1)^2$, then $S^3_K(p)\not\cong \varepsilon \mathbb{T}(p/q)$, where $p = 6q + \zeta r, r\in \{3,4,5\}$.
\end{prop}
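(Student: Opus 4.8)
The strategy is to show that the required equation $\Delta_{a,b_j}^\varepsilon(i) = 2t_{\min(i,p-i)}$ from (\ref{Delta}) forces strong constraints on the sequence $\{t_s\}$ that become incompatible with the known structural properties in (\ref{ts_nonincreasing}) once $p$ is large. The key point is that (\ref{ts_nonincreasing}) tells us $\{t_s\}$ is nonnegative, nonincreasing with steps of at most $1$, and vanishes at $s = g(K)$. Meanwhile the left-hand side $\Delta_{a,b_j}^\varepsilon(i)$ is, by (\ref{delta_plus}) and (\ref{T32complement}), an explicit combination of lens-space correction terms evaluated at the affine shifts $\phi_{a,b_j}(i)$, plus the characteristic-function term $2\varepsilon\chi_{[0,q)}(\phi_{a,b_j}(i))$. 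Since lens space correction terms can be written out via the recursion (\ref{recursion}) as piecewise-quadratic expressions in $i$, the quantity $\Delta_{a,b_j}^\varepsilon(i)$ is (up to the bounded $\chi$ term) a controlled function of $i$ whose values and increments I can estimate.

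First I would fix $p = 6q + \zeta r$ and use Proposition \ref{prop} together with (\ref{T32complement}) to write $\Delta_{a,b_j}^\varepsilon(i)$ explicitly, substituting the formula $d(L(p,1),i) = \frac{(2i-p)^2 - p}{4p}$ and the recursive expression for $d(\mathbb{T}(p/q), \phi_{a,b_j}(i)) = d(L(p,q), \phi_{a,b_j}(i)) - 2\chi_{[0,q)}(\phi_{a,b_j}(i))$. Second, I would translate (\ref{Delta}) into two kinds of constraints on $t$: a \emph{difference} constraint, coming from the fact that consecutive $t$-values differ by $0$ or $1$, and a \emph{nonnegativity/vanishing} constraint, coming from $t_s \geq 0$ and $t_{g(K)} = 0$. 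The difference constraint is the more powerful one: because $2t$ changes by at most $2$ as the index $\min(i,p-i)$ moves by one, the corresponding increments of $\Delta_{a,b_j}^\varepsilon$ along suitable arithmetic progressions of $i$ (governed by $a$ and $q$) must also stay within a bounded window. Third, I would examine these increments as $i$ ranges over a block and extract a quantity that grows linearly (or quadratically) in $p$ but is forced to remain bounded; comparing against the explicit threshold $310 r (36r+1)^2$ should produce the contradiction.

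The heart of the argument is a counting/pigeonhole step: the affine map $\phi_{a,b_j}$ shuffles the $\mathbb{Z}/p\mathbb{Z}$ indexing, so the seemingly smooth left side $\Delta_{a,b_j}^\varepsilon(i)$ becomes a sequence whose jumps are dictated by how $a \cdot (\text{step})$ wraps modulo $p$ and how the floor functions $\lfloor \phi(i)/q \rfloor$ advance. Because $a < p/2$ and $p \approx 6q$, the step $a$ is comparable to $q$ in a way that makes the induced walk on $t$-indices take large jumps; I would bound how many indices $s$ can appear as $\min$-arguments and show that realizing all the prescribed values of $\Delta$ would require $\{t_s\}$ either to be non-monotone or to take negative values. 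The explicit constant $310 r(36r+1)^2$ strongly suggests the decisive estimate is a bound on $a$ (or on $g(K)$, since $g(K) \le (p-1)/2$ and $t_{g(K)} = 0$) of the form $O(r^3)$, after which any larger $p$ leaves no room.

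I expect the \emph{main obstacle} to be controlling the interaction between the affine reindexing $\phi_{a,b_j}$ and the floor functions in the lens-space formula (\ref{345})/(\ref{recursion}): the jumps of $d(\mathbb{T}(p/q), \phi_{a,b_j}(i))$ as $i$ increments are not uniform, so I must carefully track in which residue classes modulo $q$ the shifted index lands, and this is where the casework on $\zeta = \pm 1$, on $j = 0,1$, and on $r \in \{3,4,5\}$ all enters. Organizing this so that a single clean inequality emerges — rather than a proliferation of subcases — will require choosing the right arithmetic progression of test values $i$ and the right telescoping of the constraint (\ref{Delta}), and I anticipate the bound will come from summing the step-$\le 1$ constraints over a long run to contradict a growing lower bound on some $t_s$.
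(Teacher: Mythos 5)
Your high-level plan---test the identity (\ref{Delta}) against the step constraint $t_s-t_{s+1}\in\{0,1\}$ from (\ref{ts_nonincreasing}) along arithmetic progressions of $i$, and close with a counting argument once $p$ is large---is the right skeleton, and it is essentially the skeleton of the paper's proof. But the plan as written stops exactly where the work begins, and the two ideas it is missing are the ones that make the argument go through; moreover your guess about the origin of the constant $310r(36r+1)^2$ (a bound on $a$ or on $g(K)$ of size $O(r^3)$) is off target.

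The first missing idea is a \emph{localization} of $a$, not a bound on its size: applying $\Delta^{\varepsilon}_{a,\theta}(0)-\Delta^{\varepsilon}_{a,\theta}(1)\in\{0,2\}$ together with the reduction (\ref{main_case}) of $d(L(p,q),\cdot)$ down to $d(L(r,s),\cdot)$ forces $|a-\tfrac{mp}{6}|<\sqrt{11rp/6}$ for some $m\in\{0,1,2,3\}$ (Lemma \ref{lemma4.4}). This is what tames the ``interaction between the affine reindexing and the floor functions'' that you correctly identify as the main obstacle: once $a$ is pinned near $mp/6$, the index $\phi_{a,\theta}(6k)$ advances predictably modulo $q$ and modulo $r$, and one can compute $\Delta^{\varepsilon}_{a,\theta}(6k)-\Delta^{\varepsilon}_{a,\theta}(6k+1)=Ak+B+C_k$ with $A=-\frac{72\varepsilon\zeta}{pr}\bigl(a-\frac{mp}{6}\bigr)^2-\frac{12}{p}$ independent of $k$, $B$ constant, and $C_k$ a difference of two correction terms of $L(r,s)$, hence taking at most $3r$ values. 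The second missing idea is the actual pigeonhole, which is on $k$ rather than on the indices $s$ or the values $t_s$: when $A\neq 0$, an affine function of $k$ lands in $\{0,2\}$ for at most two values of $k$ per possible value of $C_k$, so at most $6r$ admissible $k$ exist, whereas the hypothesis $p>310r(36r+1)^2$ guarantees that all of $k=0,1,\dots,6r$ satisfy the admissibility condition (\ref{k_condition})---that, and nothing about $a$ or $g(K)$, is where the constant comes from. Finally, your plan does not anticipate the degenerate case $A=0$, which must be excluded separately: it would force $\bigl(a-\frac{mp}{6}\bigr)^2=\frac{r}{6}$, impossible because $\sqrt{r/6}$ is irrational for $r\in\{3,4,5\}$. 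Without the localization of $a$, the affine-plus-finitely-many-values structure, and the $A=0$ case, the proposal does not yet contain a proof.
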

Let $s\in \{0,1,...,r-1\}$ be the reduction of $q$ modulo $r$. For any integer $n$, let $\theta(n)\in \{0,1\}$ be the reduction of $n$ modulo $2$, and let $\bar{\theta}(n) = 1- \theta(n)$.

\begin{lem}
For $0\leq i<q$,
\begin{equation}
d(L(q, \frac{1-\zeta}{2}q+\zeta r), i)=\zeta \left(\frac{(2i+1-q-\zeta r)^2}{4qr}-\frac{1}{4}-d(L(r,s), i \mod r)\right).
\label{main_case}
\end{equation}
\end{lem}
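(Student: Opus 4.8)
The plan is to derive the stated formula directly from the lens space recursion (\ref{recursion}), treating the two cases $\zeta = 1$ and $\zeta = -1$ in parallel by exploiting the conjugation symmetry $d(-L(p,q),i) = -d(L(p,q),i)$ encoded in the $\zeta$ prefactor. First I would note that $\frac{1-\zeta}{2}q + \zeta r$ equals $r$ when $\zeta = 1$ and equals $q - r$ when $\zeta = -1$, so the left-hand side is $d(L(q,r),i)$ in the first case and $d(L(q,q-r),i)$ in the second. Since $L(q,q-r) \cong -L(q,r)$ (reversing orientation sends the surgery coefficient $r$ to $q-r$ modulo $q$), the factor $\zeta$ on the right is exactly the sign produced by (\ref{conjugation}). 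This reduces both cases to computing $d(L(q,r),i)$, or equivalently $d(-L(q,r),i)$, for $0 \le i < q$.

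Next I would apply the recursion (\ref{recursion}) to $-L(q, \frac{1-\zeta}{2}q + \zeta r)$ with one step of the Riemenschneider-type continued-fraction descent. The idea is to choose the arguments so that a single application of (\ref{recursion}) peels off the quadratic term $\frac{(2i+1-q-\zeta r)^2}{4qr}$ and the constant $\frac14$, leaving behind a term of the form $-d(-L(r, s), j)$ where $s = q \bmod r$ and $j = i \bmod r$. Concretely, in the recursion $d(-L(p',q'),i) = \frac14 - \frac{(2i+1-p'-q')^2}{4p'q'} - d(-L(q',r'),j)$, I would set $p' = q$ and $q' = r$ (for $\zeta = 1$), so that $p'q' = qr$ matches the denominator, $p' + q' = q + r$ matches the shift in the numerator, and the reduction of $p' = q$ modulo $q' = r$ is precisely $s$ while the reduction of $i$ is $j = i \bmod r$. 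This yields the formula on the nose once the sign bookkeeping from $\zeta$ is tracked through the conjugation identity. For $\zeta = -1$ one runs the same computation after first passing to $-L(q,r)$ via orientation reversal, which is where the overall $\zeta$ factor is absorbed.

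The main obstacle I expect is the sign and range bookkeeping rather than any deep step: one must verify that the index $i$ genuinely lies in the valid range $0 \le i < p' + q'$ demanded by (\ref{recursion}), and that the reduction $j$ of $i$ modulo $r$ coincides with the argument $i \bmod r$ appearing on the right-hand side, uniformly across both signs of $\zeta$. A subtle point is that when $\zeta = -1$ the surgery coefficient $q - r$ need not equal the residue used in the recursion, so I would need to first reduce $q - r \bmod q$ and confirm the orientation-reversal step $L(q,q-r) \cong -L(q,r)$ is being applied with the correct Spin$^c$ labeling; the conjugation symmetry $d(Y,\mathfrak{s}) = d(Y, J\mathfrak{s})$ together with Lemma \ref{lemma_J} should reconcile the two labelings. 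Once these identifications are pinned down, substituting back and simplifying gives (\ref{main_case}) directly, with the quadratic and constant terms falling out of the single recursion step and the residual $d(L(r,s), i \bmod r)$ term accounting for the continued-fraction remainder.
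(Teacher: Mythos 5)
Your $\zeta=+1$ case is exactly the paper's argument: one application of the recursion (\ref{recursion}) with $p'=q$, $q'=r$ gives $d(L(q,r),i)=\frac{(2i+1-q-r)^2}{4qr}-\frac14-d(L(r,s),i\bmod r)$ on the nose. The gap is in your $\zeta=-1$ case. You propose to reduce it to the first case via $L(q,q-r)\cong -L(q,r)$ and the sign identity $d(-Y,\mathfrak{s})=-d(Y,\mathfrak{s})$. But that identity holds for a fixed Spin$^c$ structure $\mathfrak{s}$, whereas the integer labels on the two sides of your reduction come from two \emph{different} surgery presentations (the $q/(q-r)$-surgery parametrization versus the $q/r$-surgery parametrization), and the induced relabeling is not the identity. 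Indeed, if it were, you would obtain $d(L(q,q-r),i)=-\bigl(\frac{(2i+1-q-r)^2}{4qr}-\frac14-d(L(r,s),i\bmod r)\bigr)$, with numerator $(2i+1-q-r)^2$ rather than the required $(2i+1-q+r)^2$; these disagree. Checking small examples (e.g.\ $d(L(3,2),i)$ versus $-d(L(3,1),i)$, or $d(L(5,3),i)$ versus $-d(L(5,2),i)$ from the displayed tables) shows the needed relabeling is $i\mapsto i+r \pmod q$. The tools you invoke to "reconcile the labelings" cannot produce this: the conjugation symmetry $d(Y,\mathfrak{s})=d(Y,J\mathfrak{s})$ and Lemma \ref{lemma_J} only identify labels $i$ and $p+q-1-i$ \emph{within} one parametrization and leave all $d$-values unchanged, so composing with $J$ can never convert the wrong quadratic term into the right one. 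Establishing the relabeling $i\mapsto i+r$ honestly would require tracing the Spin$^c$ identifications through the four-manifold definitions, which is a genuinely new input not present in your outline.

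The paper avoids this entirely: for $\zeta=-1$ it applies the recursion (\ref{recursion}) \emph{twice} directly to $d(L(q,q-r),i)$ --- first peeling off to $d(L(q-r,r),i)$ (the reduction of $q$ modulo $q-r$ is $r$ when $q>2r$), then to $d(L(r,s),i\bmod r)$ --- and finishes with the algebraic identity
\begin{equation*}
\frac{(2i+1-2q+r)^2}{4q(q-r)}-\frac{(2i+1-q)^2}{4r(q-r)}=\frac14-\frac{(2i+1-q+r)^2}{4qr},
\end{equation*}
which produces the sign flip and the $+r$ in the numerator without ever reversing orientation. I recommend you either adopt that two-step descent, or, if you want to keep the orientation-reversal route, explicitly prove the Spin$^c$ relabeling $i\mapsto i+r$ under the homeomorphism $L(q,q-r)\cong -L(q,r)$ (also note the range hypotheses $0\le i<q$ and $q>2r$ needed for the residues in the recursion to be the ones you claim).
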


\begin{proof}
For $0\leq i<q$, using (\ref{recursion}), we have 

\begin{align*}
d(L(q, r), i)&=\frac{(2i+1-q-r)^2}{4qr}-\frac{1}{4}-d(r,s, i\mod r)\\
d(L(q, q-r), i)
&=\frac{(2i+1-2q+r)^2}{4q(q-r)}-\frac{1}{4}-d(L(q-r, r), i)\\
&=\frac{(2i+1-2q+r)^2}{4q(q-r)}-\frac{(2i+1-q)^2}{4r(q-r)}+d(L(r,s), i\mod r)\\
&=-\left(\frac{(2i+1-q+r)^2}{4qr}-\frac{1}{4}-d(L(r,s), i\mod r)\right)
\end{align*}

\end{proof}
Recall $\phi_{a,b}:  \mathbb{Z}/p\mathbb{Z} \to \mathbb{Z}/p\mathbb{Z}$ is defined by $$\phi_{a,b}(i)=ai+b \mod p.$$
\begin{lem}
When $p > 52$,  there is at most one value for $b$ in $\{b_0,b_1\}$.
\label{bvalue}
\end{lem}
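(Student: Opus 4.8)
The plan is to split according to the three surgery types and observe that the parity of $q$ already settles the two odd cases, leaving the octahedral case as the only real work.

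First I would record the parities of the two candidates $b_0=\frac{q-1}{2}$ and $b_1=\frac{p+q-1}{2}$ from Lemma \ref{b}: $b_0\in\mathbb{Z}$ exactly when $q$ is odd, while $b_1\in\mathbb{Z}$ exactly when $p+q$ is odd. For a $\mathbf{T}$-type ($p=6q\pm3$) or $\mathbf{I}$-type ($p=6q\pm5$) manifold $p$ is always odd, so $p+q$ is odd precisely when $q$ is even; hence exactly one of $b_0,b_1$ is an integer and the other, being a half-integer, is discarded. This yields a unique admissible $b$ with no hypothesis on $p$. For an $\mathbf{O}$-type manifold ($p=6q\pm4$) we have $p$ even and, by Lemma \ref{Lemma}, $q$ odd, so both $b_0$ and $b_1$ are integers; this is the case where the bound $p>52$ is needed.

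For the $\mathbf{O}$-type case I would use the conjugation symmetry. Since $p$ is even, $S^3_K(p)$ has exactly two self-conjugate $\mathrm{Spin}^c$ structures, $0$ and $p/2$, and $\mathbb{T}(p/q)$ has exactly two, namely $b_0$ and $b_1$ (the two solutions of $2j\equiv q-1\pmod p$). Because $\phi$ commutes with $J$, as in the proof of Lemma \ref{b}, it carries $\{0,p/2\}$ onto $\{b_0,b_1\}$ with $\phi(0)=b$; write $b'=\phi(p/2)$ for the remaining structure, so that $\{b,b'\}=\{b_0,b_1\}$. Using the genus bound $2g(K)-1\le p$ for an L-space surgery one gets $g(K)\le p/2$, whence $t_{p/2}=0$ and $d(S^3_K(p),p/2)=-\tfrac14$. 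Comparing with $d(S^3_K(p),p/2)=\varepsilon\,d(\mathbb{T}(p/q),b')$ forces $d(\mathbb{T}(p/q),b')=-\varepsilon/4$. Thus, if I can show $d(\mathbb{T}(p/q),b_0)\ne d(\mathbb{T}(p/q),b_1)$, then at most one of them equals $-\varepsilon/4$, so $b'$, and therefore $b$, is uniquely determined.

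It then remains to compute the two self-conjugate correction terms of $\mathbb{T}(p/q)$. By (\ref{T32complement}), $d(\mathbb{T}(p/q),b_0)=d(L(p,q),b_0)-2$ since $b_0<q$, while $d(\mathbb{T}(p/q),b_1)=d(L(p,q),b_1)$ since $b_1\ge q$. Applying the recursion (\ref{recursion}) once reduces both to $d(L(q,r'),\cdot)$ with $r'=p\bmod q\in\{4,q-4\}$, the quadratic term contributing $p/(4q)=\tfrac32+\zeta/q$ at $b_0$ and $0$ at $b_1$. A second reduction, supplied by Lemma \ref{main_case}, expresses $d(L(q,r'),\cdot)$ through the explicitly tabulated values $d(L(4,s),\cdot)$ from (\ref{345}), where $s=q\bmod 4$. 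The $1/q$ terms cancel exactly, and I expect to obtain $d(\mathbb{T}(p/q),b_0)-d(\mathbb{T}(p/q),b_1)=-\tfrac12+\zeta\bigl(d(L(4,s),m)-d(L(4,s),m')\bigr)$, where $m\equiv\tfrac{q-1}{2}$ and $m'\equiv\tfrac{q-1}{2}+2\pmod 4$, a constant in $\tfrac14\mathbb{Z}$; checking the finitely many residues $(s,\zeta)$ shows this value is always one of $\pm\tfrac12,-\tfrac32$, hence nonzero. The main obstacle is this last step: carrying out the double lens-space reduction cleanly and tracking the residues $\tfrac{q-1}{2}\bmod 4$ and $j_1\bmod 4$. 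This is precisely where $p>52$ enters, guaranteeing $q\ge 9$ so that the intermediate lens spaces in the recursion and in Lemma \ref{main_case} are nondegenerate and the modular reductions fall in the generic pattern; the remaining small $p$ are treated by the direct computation used elsewhere in the paper.
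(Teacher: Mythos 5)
Your proposal is correct, and it shares the paper's skeleton: the $\mathbf{T}$- and $\mathbf{I}$-cases are dispatched exactly as in the paper ($p$ odd forces exactly one of $b_0,b_1$ to be an integer, with no condition on $p$), and in the $\mathbf{O}$-case both arguments revolve around the two self-conjugate $\mathrm{Spin}^c$ structures $\{0,p/2\}$ of $S^3_K(p)$ being carried onto $\{b_0,b_1\}$, followed by the same double lens-space reduction ((\ref{recursion}) then (\ref{main_case})), which gives $d(L(p,q),b_0)-d(L(p,q),b_1)=\tfrac32+\zeta(-1)^l$ and hence $d(\mathbb{T}(p/q),b_0)-d(\mathbb{T}(p/q),b_1)=-\tfrac12+\zeta(-1)^l\neq 0$; your hedged formula and your reading of where $p>52$ enters (forcing $q\ge 9$ so the reductions are in the generic range) both agree with the paper. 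Where you genuinely diverge is the endgame. The paper notes that $\Delta^\varepsilon_{a,b_j}(0)-\Delta^\varepsilon_{a,b_j}(p/2)=2(t_0-t_{p/2})$ must be an even integer and computes that its parity is that of $\tfrac32(s\mp\varepsilon)$, which selects $j$. You instead invoke the genus bound $2g(K)-1\le p$ for positive L-space surgeries to get $t_{p/2}=0$, hence $d(S^3_K(p),p/2)=-\tfrac14$ exactly, so $\phi(p/2)$ must be the unique self-conjugate structure of $\mathbb{T}(p/q)$ with correction term $-\varepsilon/4$. Your route buys a cleaner logical finish (you only need the two candidate correction terms to differ, not the parity of a particular combination), at the price of importing the genus bound, which the paper's proof does not use and which does not follow from (\ref{ts_nonincreasing}) alone, so it needs a citation (it is standard, e.g.\ from \cite{Rational}). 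A quick check on examples such as $(p,q)=(58,9)$ confirms that both mechanisms single out the same $b$.
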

\begin{proof}
For a $\mathbf{T}$- or $\mathbf{I}$-type $p$-surgery on a knot, by lemma \ref{Lemma}, $S^3_K(p)\cong \varepsilon \mathbb{T}(p/q)$, where $p=6q+\zeta r$, $r=3\text{ or } 5$.
Here $p$ is odd, $\frac{p}{2}$ is a half integer. By lemma \ref{b}, if $q$ is odd, $b=\frac{q-1}{2}$; if $q$ is even,  $b=\frac{p+q-1}{2}$. We may write $b=\frac{\bar{\theta}(q)p+q-1}{2}$.

For an $\mathbf{O}$-type $p$-surgery on a knot, by lemma \ref{Lemma}, $S^3_K(p)\cong \varepsilon \mathbb{T}(p/q)$, where $p=6q+\zeta r$, $r=4$. Note here $p$ is even, $(p,q)=(p,a)=1$, $q, a$ are odd, so $q=4l+s$, where $s=1,3$. By lemma \ref{b}, $b_j=\frac{jp+q-1}{2}$, $j = 0,1$, and both of them are integers. Denote $\phi_{a,j}(i)=ai+b_j$.

More specifically, $p=6q+\zeta r$, $r=4$, $S^3_K(p)\cong \varepsilon \mathbb{T}(p/q)$, $q=4l+s$,  $\zeta, \varepsilon \in \{1,-1\}$, $s \in \{1,3\}$. For $\phi_{a,j}$, $\phi_{a,j}(0) = \frac{jp+q-1}{2}, \phi_{a,j}(\frac{p}{2}) =\frac{(1-j)p+q-1}{2}$.

Using ($\ref{recursion}$) and (\ref{main_case}), we get 
\begin{align*}
&d(L(p,1),0)-d(L(p,1),\frac{p}{2})=\frac{p}{4}\\
&d(L(p,q),\frac{q-1}{2})-d(L(p,q),\frac{p+q-1}{2})\\
&=\frac{p^2}{4pq}-d(L(q,\frac{1-\zeta}{2}q+\zeta r), \frac{q-1}{2})+d(L(q,\frac{1-\zeta}{2}q+\zeta r), \frac{q+\zeta r-1}{2})\\
&=\frac{p^2}{4pq}-\zeta [\frac{r^2}{4qr}-d(L(4,s),2l+\frac{s-1}{2} \mod 4)+d(L(4,s),2l+2+\frac{s-1}{2} \mod 4)] \\
&=\frac{3}{2} + \zeta (-1)^l
\end{align*}

Here we require $q-r>r$ and $\frac{q+r-1}{2}<q$, it suffices to take $p> 52 = 6*2r+r$.

Using Propsition \ref{prop}, (\ref{T32complement}) and ($\ref{Delta}$), we get
\begin{align*}
\Delta^\varepsilon_{a, b_j}(0) - \Delta^\varepsilon_{a, b_j}(\frac{p}{2})
=\frac{p}{4}\mp\varepsilon (\frac{3}{2} +\zeta (-1)^l-2)=6l+\zeta \mp\varepsilon \zeta(-1)^l+\frac{3}{2}(s\mp \varepsilon)\pm2\varepsilon
\end{align*}

The parity of $\Delta^\varepsilon_{a, b_j}(0) - \Delta^\varepsilon_{a, b_j}(\frac{p}{2})$ depends only on the parity of $\frac{3}{2}(s\mp \varepsilon)$, and by ($\ref{Delta}$), it should be even, so we get 
\begin{align*}
b=\left\{ 
  \begin{array}{l l}
   \frac{q-1}{2} \quad \text{if  } s=1, \varepsilon=1 \quad \text{or} \quad s=3,  \varepsilon=-1\\
   \frac{p+q-1}{2} \quad \text{if  } s=1, \varepsilon=-1 \quad \text{or} \quad s=3,  \varepsilon=1
  \end{array} \right.
\end{align*}
We can write $b=\frac{\bar \theta(\frac{s+\varepsilon}{2})p+q-1}{2}$ for $p>52$.
\end{proof}

Because of Lemma \ref{bvalue}, we can treat $\mathbf{T}$-,$\mathbf{O}$- and  $\mathbf{I}$-type manifolds uniformly. Let 
$$\theta = \theta (q, \varepsilon)=\left\{ 
 \begin{array}{l l}
\bar{\theta} (q) \quad \quad \text{if  } r =3,5\\
\bar \theta(\frac{s+\varepsilon}{2}) \quad \text{if  } r=4, q=4l+s
 \end{array} \right.,$$
 then $b=\frac{\theta p+q-1}{2}$,
we may denote $\phi_{a,b}$ by $\phi_{a,\theta}$.

\begin{lem}
Assume that $S^3_K(p) \cong \varepsilon \mathbb{T}(p/q)$. Let $m \in \{ 0,1,2,3\}$ satisfy that
$$
0\leq a-mq+\frac{\theta \zeta r+q-1}{2}<q,
$$
then
$$
|a-\frac{mp}{6}|<\sqrt{\frac{11rp}{6}}.
$$
\label{lemma4.4}
\end{lem}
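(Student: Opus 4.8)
The goal is to show that if $S^3_K(p)\cong\varepsilon\mathbb{T}(p/q)$ with $p=6q+\zeta r$, then for the specific $m\in\{0,1,2,3\}$ pinned down by the hypothesis $0\le a-mq+\frac{\theta\zeta r+q-1}{2}<q$, the quantity $a$ must lie within $\sqrt{11rp/6}$ of $mp/6$. My strategy would be to exploit the constraint (\ref{Delta}), namely that $\Delta^\varepsilon_{a,\theta}(i)=2t_{\min(i,p-i)}$ for a sequence $\{t_s\}$ satisfying (\ref{ts_nonincreasing}); in particular $\Delta^\varepsilon_{a,\theta}(i)\ge 0$ and the nonincreasing/slow-decay behavior of $t_s$ forces tight control on how the $d$-values of $\mathbb{T}(p/q)$ are permuted by $\phi_{a,\theta}$. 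The number $mp/6$ is essentially $mq$ (since $p/6=q+\zeta r/6$), so the inequality $|a-mp/6|<\sqrt{11rp/6}$ is really a statement that $a$ is close to a multiple of $q$, which is exactly what the hypothesis on $m$ is designed to detect.

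First I would unwind the definition of $\phi_{a,\theta}(i)=ai+b\bmod p$ with $b=\frac{\theta p+q-1}{2}$, and use (\ref{T32complement}) to write $d(\mathbb{T}(p/q),\phi_{a,\theta}(i))=d(L(p,q),\phi_{a,\theta}(i))-2\chi_{[0,q)}(\phi_{a,\theta}(i))$. The indicator term is the crucial one: the hypothesis on $m$ is precisely the condition that controls when $\phi_{a,\theta}(i)$ (for a suitable reference value of $i$, likely $i$ near a small index where $t_i$ is near its maximum) lands in the window $[0,q)$. I would then evaluate $\Delta^\varepsilon_{a,\theta}$ at one or two carefully chosen indices $i$ and compare the leading quadratic terms. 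Using the recursion (\ref{recursion}) and Lemma's formula (\ref{main_case}), the dominant contributions to $d(L(p,q),\cdot)$ are quadratic in the index, scaled by $1/(4pq)$ and $1/(4qr)$; the defect $|a-mp/6|$ enters quadratically through these terms.

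The key computational step is to set up a difference $\Delta^\varepsilon_{a,\theta}(i_1)-\Delta^\varepsilon_{a,\theta}(i_2)$ for two indices whose images under $\phi_{a,\theta}$ straddle the relevant residue structure, extract the leading term, and bound it against the maximal allowed size of $t_s$. Because (\ref{ts_nonincreasing}) forces $t_s\le t_0$ and $t_s-t_{s+1}\le 1$, the sequence $\{t_s\}$ cannot be too large relative to $g(K)\le (p-1)/2$; I expect that combining this linear growth bound on $\Delta$ with the quadratic expression coming from the $d$-invariants yields an inequality of the shape $(a-mp/6)^2\cdot C/(pq)\lesssim (\text{linear in }p)$, which after using $q\approx p/6$ collapses to $|a-mp/6|<\sqrt{11rp/6}$. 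The constant $11r/6$ should emerge from carefully tracking the coefficients in (\ref{main_case}) together with the maximal slope permitted by (\ref{ts_nonincreasing}).

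The hardest part will be the bookkeeping of residues and floors. The map $\phi_{a,\theta}$ interacts with the two-step recursion for $d(L(p,q),\cdot)$ through reductions modulo $q$ and then modulo $r$, and one must be careful that the chosen indices $i_1,i_2$ produce images whose $q$-residues and $r$-residues are genuinely controlled by $m$ and $s$, so that the $d(L(r,s),\cdot)$ terms (which are $O(1)$) can be absorbed into the error rather than contaminating the leading estimate. I anticipate that the precise choice of $m$ via the stated half-open interval is exactly what guarantees $\phi_{a,\theta}(i)$ for the reference index falls into a single linear branch of the piecewise-quadratic $d$-function, so that no branch-crossing correction appears at leading order; verifying this cleanly — and confirming that the $O(1)$ lens-space corrections and the $\frac{3}{2}(s\mp\varepsilon)$-type parity terms do not overwhelm the quadratic gap — is where the real work lies.
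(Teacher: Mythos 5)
Your overall framing is right---the paper does prove this by evaluating $\Delta^\varepsilon_{a,\theta}$ at specific indices, expanding via (\ref{T32complement}), (\ref{recursion}) and (\ref{main_case}), and reading off a quadratic leading term in $a-\frac{mp}{6}$; and your guess about the role of the hypothesis on $m$ is correct (it is exactly what places the index $\frac{\theta\zeta r+q-1}{2}+a-mq$ into the range where (\ref{main_case}) applies, so no branch-crossing correction appears at the second level of the recursion). But the quantitative core of your plan does not close. You propose to bound the relevant difference against ``the maximal allowed size of $t_s$,'' i.e.\ something linear in $p$, and you write the resulting inequality as $(a-\tfrac{mp}{6})^2\cdot C/(pq)\lesssim(\text{linear in }p)$. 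With $q\approx p/6$ this gives only $|a-\tfrac{mp}{6}|\lesssim p^{3/2}$, which is vacuous since $0<a<\tfrac{p}{2}$ already forces $|a-\tfrac{mp}{6}|<p$. It does not ``collapse'' to $\sqrt{11rp/6}$.

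The actual proof rests on two sharper facts. First, one takes the \emph{consecutive} difference $\Delta^\varepsilon_{a,\theta}(0)-\Delta^\varepsilon_{a,\theta}(1)=2(t_0-t_1)\in\{0,2\}$, an absolute constant bound coming from (\ref{ts_nonincreasing}); the linear-in-$p$ bound on $t_s$ itself is never used and would be far too weak. Second, the leading quadratic term of this difference has denominator $pr$ with $r\in\{3,4,5\}$, not $pq$: the two second-order contributions $\varepsilon\frac{a[a+(\theta-1)p]}{pq}$ and $\varepsilon\zeta\frac{[a-mq+(\theta-1)\zeta r](a-mq)}{qr}$ combine, via $p=6q+\zeta r$, into $-\frac{6\varepsilon\zeta}{pr}\left(a-\frac{mp}{6}\right)^2$ plus the bounded terms $-\varepsilon m(1-\theta)+\frac{\varepsilon m^2}{6}$. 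All remaining contributions (the indicator terms, the $d(L(r,s),\cdot)$ terms, and $\frac{p-1}{p}$) are bounded in absolute value by $\frac{9}{2}$, and the $m$-terms by another $\frac{9}{2}$, so the constraint that the difference lies in $\{0,2\}$ forces $\frac{6}{pr}\left(a-\frac{mp}{6}\right)^2<2+\frac{9}{2}+\frac{9}{2}=11$, which is the claim. Without both the $O(1)$ constraint on a consecutive difference and the $pr$ (rather than $pq$) denominator, the estimate you describe is off by a factor of order $p^2$ and cannot produce the stated bound.
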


\begin{proof}
 By (\ref{ts_nonincreasing}), $\Delta^\varepsilon_{a, \theta}(0) - \Delta^\varepsilon_{a, \theta}(1) = 0 \text{ or } 2$. 
Let $h=\left\{ 
  \begin{array}{l l}
   0 \quad \text{if  } 0\leq \frac{\theta p + q-1}{2}+a < p\\
   1 \quad \text{if  } \frac{\theta p + q-1}{2}+a\geq p
  \end{array} \right.$.

\begin{align}
\label {eq12}
&\Delta^\varepsilon_{a, \theta}(0) - \Delta^\varepsilon_{a, \theta}(1) \\ 
   =& d(L(p,1),0)-\varepsilon [d(L(p,q), \frac{\theta p + q-1}{2})-2\chi_{[0,q)}( \frac{\theta p + q-1}{2})] \notag \\
&-d(L(p,1),1)+\varepsilon[d(L(p,q),  \frac{\theta p + q-1}{2}+a)-2\chi_{[0,q)}( \frac{\theta p + q-1}{2}+a-hp)]\notag\\
=& 2\varepsilon\{\chi_{[0,q)}( \frac{\theta p + q-1}{2})-\chi_{[0,q)}( \frac{\theta p + q-1}{2}+a-hp)\}+\frac{p^2}{4p}-\frac{(p-2)^2}{4p}\notag\\
&-\varepsilon\{ \frac{[(\theta-1) p]^2-pq}{4pq}-d(L(q, \frac{1-\zeta}{2}q+\zeta r),\frac{\theta\zeta r+q-1}{2})\notag\\
&-  \frac{[2a+(\theta-1) p]^2-pq}{4pq}+d(L(q, \frac{1-\zeta}{2}q+\zeta r),\frac{\theta\zeta r+q-1}{2}+a-mq)\}\notag
\end{align}

Let $i = \frac{\theta\zeta r+q-1}{2} \mod r, j =  \frac{\theta\zeta r+q-1}{2}+a-mq \mod r$.
Since $0\leq \frac{\theta \zeta r+q-1}{2}+a-mq<q$, we use (\ref{main_case}), the right hand side of (\ref{eq12}) becomes

\begin{align*}
& 2\varepsilon\left(\chi_{[0,q)}( \frac{\theta p + q-1}{2})-\chi_{[0,q)}( \frac{\theta p + q-1}{2}+a-hp)\right)+\frac{p-1}{p}+\varepsilon\frac{a[a+(\theta-1)p]}{pq}\\
&+\varepsilon\zeta \left( \frac{[(\theta-1) \zeta r]^2-qr}{4qr}-d(L(r,s),i)- \frac{[2a-2mq+(\theta-1)\zeta r]^2-qr}{4qr}+d(L(r,s),j)\right)\\
&= C+\varepsilon\frac{a[a+(\theta-1)p]}{pq} -\varepsilon\zeta \frac{[a-mq+(\theta-1) \zeta r](a-mq)}{qr}\\
&=-\frac{6\varepsilon\zeta}{pr}(a - \frac{mp}{6})^2  -\varepsilon m (1 -\theta) +\frac{ \varepsilon m^2}{6} +C,
\end{align*}
where
$$C=  2\varepsilon\{\chi_{[0,q)}( \frac{\theta p + q-1}{2})-\chi_{[0,q)}( \frac{\theta p + q-1}{2}+a-hp)\}+\varepsilon \zeta [d(L(r,s),j)-d(L(r,s),i)]+\frac{p-1}{p}.$$

Using (2),(3),(4), $|C|\leq \frac{6}{5}+2+1<\frac{9}{2}$. 

Moreover, $|-\varepsilon m (1 -\theta) +\frac{ \varepsilon m^2}{6}|\leq m+\frac{m^2}{6}\leq 3+\frac{3}{2}=\frac{9}{2}$.
So we get 
$$
|\frac{6}{pr}(a - \frac{mp}{6})^2|< 2+\frac{9}{2}+\frac{9}{2}=11,
$$
so our conclusion holds.
\end{proof}

\begin{lem}
Suppose $p>767$.
Let $k$ be an integer satisfying 
\begin{equation}
0 \leq k<\frac{1}{6}\left(\frac{\sqrt 6}{13 \sqrt{11r}}\sqrt p-1\right).
\label{k_condition}
\end{equation}
Let $$i_k=\frac{\theta\zeta r+q-1}{2}+6ka-kmp \mod r, j_k=\frac{\theta\zeta r+q-1}{2}+(6k+1)a-kmp-mq \mod r.$$
Then 
$\Delta^\varepsilon_{a, \theta}(6k) - \Delta^\varepsilon_{a, \theta}(6k+1) = Ak+B+C_k$,
where 
\begin{align*}
A &=-\frac{72\varepsilon\zeta}{pr}(a - \frac{mp}{6})^2  - \frac{12}{p},\\ 
B&=\varepsilon\left(-\frac{6\zeta}{pr}(a - \frac{mp}{6})^2  -m (1 -\theta) +\frac{ m^2}{6}\right)\\
&+ 2\varepsilon\{ 2\varepsilon\{\chi_{[0,q)}( 3\theta q)-\chi_{[0,q)}( (3\theta+m-6h) q)\}+\frac{p-1}{p},\\
C_k&=\varepsilon \zeta [d(L(r,s),j_k)-d(L(r,s),i_k)].
\end{align*}
and 
\begin{align*}
h=\left\{ 
  \begin{array}{l l}
   0 \quad \text{if  } 0\leq 3\theta + m < 6\\
   1 \quad \text{if  } 3\theta + m = 6
  \end{array} \right..
\end{align*}
\end{lem}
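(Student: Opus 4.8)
The plan is to carry out the computation of Lemma \ref{lemma4.4} once more, now comparing the Spin$^c$ labels $6k$ and $6k+1$ in place of $0$ and $1$. Unfolding the definition (\ref{delta_plus}) of $\Delta^\varepsilon_{a,\theta}$ and substituting (\ref{T32complement}), I would split
$$\Delta^\varepsilon_{a,\theta}(6k) - \Delta^\varepsilon_{a,\theta}(6k+1)$$
into three contributions: the self-surgery difference $d(L(p,1),6k) - d(L(p,1),6k+1)$; the difference of the lens terms $d(L(p,q),\cdot)$ at the labels $\phi_{a,\theta}(6k)$ and $\phi_{a,\theta}(6k+1)$; and the difference of the two characteristic functions $\chi_{[0,q)}$ at the same labels. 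The first contribution is immediate from the closed form $d(L(p,1),i) = \frac{(2i-p)^2 - p}{4p}$ and equals $\frac{p-1}{p} - \frac{12k}{p}$; this already accounts for the $\frac{p-1}{p}$ summand of $B$ and for the $-\frac{12}{p}$ summand of $A$.

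For the lens contribution I would descend exactly as in Lemma \ref{lemma4.4}: apply the recursion (\ref{recursion}) to pass from $L(p,q)$ to $L(q,\frac{1-\zeta}{2}q+\zeta r)$, then apply (\ref{main_case}) to pass to $L(r,s)$. The key bookkeeping device is the substitution $6a = mp + \delta$ with $\delta := 6a - mp$, which gives $\phi_{a,\theta}(6k) = 6ka + b \equiv b + k\delta \pmod p$, so that the reduced label differs from the base label $b$ only by the small shift $k\delta$. Consequently each quadratic term $\frac{(2j+1-p-q)^2}{4pq}$ and $\frac{(2j'+1-q-\zeta r)^2}{4qr}$, evaluated at the two labels that differ by $a$, splits into a $k$-independent part reproducing the corresponding term of Lemma \ref{lemma4.4} and a part linear in $k$ proportional to $\delta$. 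Summing the $k$-independent parts, together with the $\frac{p-1}{p}$ above and the two characteristic-function values $\chi_{[0,q)}(3\theta q)$ and $\chi_{[0,q)}((3\theta+m-6h)q)$, reproduces $B$; summing the linear parts, together with the $-\frac{12k}{p}$ above, reproduces $Ak$ with $A = -\frac{72\varepsilon\zeta}{pr}(a-\frac{mp}{6})^2 - \frac{12}{p}$, the quadratic coefficient being exactly $12$ times the one in Lemma \ref{lemma4.4}. The residual $L(r,s)$-terms, whose arguments $i_k, j_k$ vary with $k$ modulo $r$, collect into $C_k = \varepsilon\zeta[d(L(r,s),j_k) - d(L(r,s),i_k)]$.

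The step I expect to be the main obstacle is justifying that these reductions behave uniformly over the whole admissible range of $k$: that the quotient $m$ of Lemma \ref{lemma4.4} and the branch index $h$ stay constant, that the two characteristic functions take the values displayed in $B$, and that the only $k$-dependence is the harmless variation of $i_k, j_k$ modulo $r$. This is exactly what the hypothesis $p > 767$ and the range (\ref{k_condition}) are designed to secure. Feeding the estimate $|a - \frac{mp}{6}| < \sqrt{\frac{11rp}{6}}$ from Lemma \ref{lemma4.4} into (\ref{k_condition}) gives $|k\delta| = 6k|a - \frac{mp}{6}| < \frac{p}{13}$, so the shifted labels $b + k\delta$ never cross the interval endpoints $0$ or $q$ that would change a characteristic-function value, and $\phi_{a,\theta}(6k), \phi_{a,\theta}(6k+1)$ never wrap around modulo $p$ beyond the single predicted step; this keeps $m$ and $h$ fixed. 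Verifying these interval and wraparound bounds carefully is the crux; once they are in place, the computation is the same algebra as in Lemma \ref{lemma4.4}.
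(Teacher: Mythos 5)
Your proposal follows essentially the same route as the paper: unfold $\Delta^\varepsilon_{a,\theta}$ via (\ref{delta_plus}) and (\ref{T32complement}), descend from $L(p,q)$ to $L(q,\tfrac{1-\zeta}{2}q+\zeta r)$ to $L(r,s)$ using (\ref{recursion}) and (\ref{main_case}), observe that the differences of the quadratic terms are affine in $k$ with the residual $L(r,s)$ values collected into $C_k$, and secure uniformity of $m$, $h$ and the $\chi_{[0,q)}$ values by combining (\ref{k_condition}) with the bound $|a-\tfrac{mp}{6}|<\sqrt{11rp/6}$ of Lemma \ref{lemma4.4} — exactly the estimate (giving $(6k+1)|a-\tfrac{mp}{6}|<p/13<\tfrac{q-9}{2}$ for $p>767$) that the paper uses to establish (\ref{14}), (\ref{15}) and the single wraparound case $3\theta+m=6$. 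You correctly flag the interval/wraparound verification as the crux; carrying out that check (in particular the $m=3$, $\theta=1$ case, where the paper invokes $a>\tfrac{p}{2}-\sqrt{11rp/6}$) and the remaining algebra completes the argument as in the paper.
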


\begin{proof}
 
Using(\ref{delta_plus}), we get
\begin{align}
\label{eq14}
&\Delta^\varepsilon_{a, \theta}(6k) - \Delta^\varepsilon_{a, \theta}(6k+1) \\ 
 =& d(L(p,1),6k) -d(L(p,1),6k+1)-\varepsilon [d(L(p,q), \frac{\theta p + q-1}{2}+6ka-kmp) \notag \\
&-2\chi_{[0,q)}( \frac{\theta p + q-1}{2}+6ka-kmp)] +\varepsilon[d(L(p,q),  \frac{\theta p + q-1}{2}+(6k+1)a-kmp) \notag\\
&-2\chi_{[0,q)}( \frac{\theta p + q-1}{2}+(6k+1)a-(km+h)p)]\notag\\
=& 2\varepsilon\{\chi_{[0,q)}( \frac{\theta p + q-1}{2}+6ka-kmp)-\chi_{[0,q)}(  \frac{\theta p + q-1}{2}+(6k+1)a-(km+h)p)\} \notag \\
&+\frac{(p-12k)^2}{4p}-\frac{[p-2(6k+1)]^2}{4p}-\varepsilon\{ \frac{[12ka-(2km+1-\theta) p]^2}{4pq} \notag \\
&- \frac{[(12k+2)a-(2km+1-\theta) p]^2}{4pq}-d(L(q, \frac{1-\zeta}{2}q+\zeta r),\frac{\theta\zeta r+q-1}{2}+6ka-kmp) \notag \\
&+d(L(q, \frac{1-\zeta}{2}q+\zeta r),\frac{\theta\zeta r+q-1}{2}+(6k+1)a-kmp-mq)\} \notag
\end{align}

We require 
\begin{align*}
0 \leq&\frac{\theta \zeta r+ q-1}{2}+6ka-kmp<q,\\
0\leq &\frac{\theta\zeta r+q-1}{2}+(6k+1)a-kmp-mq< q.
\end{align*} It suffices that 
$$k<\frac{1}{6}\left(\frac{q-9}{2}\sqrt{\frac{6}{11rp}}-1\right).$$
This implies 
\begin{align}
3\theta q &\leq\frac{\theta p+ q-1}{2}+6ka-kmp<(3\theta +1)q, \label{14}\\
(3\theta+m) q&\leq \frac{\theta p +q-1}{2}+(6k+1)a-kmp<(3\theta+m+1) q. \label{15}
\end{align}
When $m=3$, $\theta = 1$, (\ref{15}) becomes 
$$6 q\leq \frac{\theta p +q-1}{2}+(6k+1)a-kmp<7 q.$$
Here we require $$p\leq \frac{\theta p+q-1}{2}+(6k+1)a-kmp< p+q.$$ 
We know $a<\frac{p}{2}$, so 
$$\frac{\theta p+q-1}{2}+(6k+1)a-kmp< \frac{\theta p+q-1}{2}+\frac{p}{2} < p+q.$$ 
Moreover, we know when $m = 3$, by Lemma \ref{lemma4.4}, $a > \frac{p}{2}-\sqrt{\frac{11rp}{6}}$.
 If $$k\leq \frac{1}{6}\left(\frac{q-1}{2}\sqrt{\frac{6}{11rp}}-1\right),$$ then
$$\frac{\theta p+q-1}{2}+(6k+1)a-kmp \geq p.$$
When $p>767$, $$\frac{1}{6}\left(\frac{p}{13}\sqrt{\frac{6}{11rp}}-1\right)<\frac{1}{6}\left(\frac{q-9}{2}\sqrt{\frac{6}{11rp}}-1\right).$$

Using (\ref{main_case}), (\ref{14}) and (\ref{15}), the right hand side of (\ref{eq14}) becomes
\begin{align*}
& 2\varepsilon\{\chi_{[0,q)}( 3\theta q)-\chi_{[0,q)}( (3\theta+m-6h) q)\}+\frac{p-(12k+1)}{p}\\
&+\varepsilon\frac{a((12k+1)a-2kmp+(\theta-1)p)}{pq}\\
&+\varepsilon\zeta \{ \frac{[12ka-2kmp+(\theta-1)\zeta r]^2-qr}{4qr}-d(L(r,s),i_k)\\
&- \frac{[2(6k+1)a-2kmp-2mq+(\theta-1)\zeta r]^2-qr}{4qr}+d(L(r,s),j_k)\}.
\end{align*}
This simplifies to be
\begin{align*}
& 2\varepsilon\{\chi_{[0,q)}( 3\theta q)-\chi_{[0,q)}( (3\theta+m-6h) q)\}+\varepsilon \zeta [d(L(r,s),j_k)-d(L(r,s),i_k)]+\frac{p-(12k+1)}{p}\\
&+\varepsilon\frac{a((12k+1)a-2kmp+(\theta-1)p)}{pq} -\varepsilon\zeta \frac{((12k+1)a-2kmp-mq+(\theta-1) \zeta r)(a-mq)}{qr}\\
=&-\frac{6(12k+1)\varepsilon\zeta}{pr}(a - \frac{mp}{6})^2  -\varepsilon m (1 -\theta) +\frac{ \varepsilon m^2}{6} + 2\varepsilon\{\chi_{[0,q)}( 3\theta q)-\chi_{[0,q)}( (3\theta+m-6h) q)\}\\
&+\varepsilon \zeta [d(L(r,s),j_k)-d(L(r,s),i_k)]+\frac{p-(12k+1)}{p}\\
=& Ak+B+C_k.
\end{align*}

\end{proof}

\begin{proof} [Proof of Proposition 4.1]
If $S^3_K(p)\cong \varepsilon \mathbb{T}(p/q)$, then (\ref{Delta}) holds, so 
\begin{equation}
\Delta^\varepsilon_{a, \theta}(6k) - \Delta^\varepsilon_{a, \theta}(6k+1)  = 0 \text{ or } 2
\label{0 or 2}
\end{equation}
for all $k$ satisfying (\ref{k_condition}). If $p>310r(36r+1)^2$, then
$$
6\cdot6r +1<\frac{\sqrt 6}{13 \sqrt{11r}}\sqrt p
$$
hence $k = 6r$ satiffies (\ref{k_condition}).

Let $A, B, C_k$ be as in Lemma 4.4. If $A \neq 0$, then $Ak+B+C$ is equal to 0 or 2 for at most two values of $k$ for any given $C$. Given $p,q,a,\varepsilon, \zeta$, as $k$ varies, $C_k$ can take at most $3r$ values. It follows that $Ak+B+C_k$ cannot be 0 or 2 for $k =0, 1, ..., 6r$. As a consequence, if $p>310r(36r+1)^2$, then (\ref{0 or 2}) does not hold.

The only case we need to consider is that $A=0$. In this case $\varepsilon\zeta = -1$. $$A= \frac{12}{p}\left(\frac{6}{r}(a - \frac{mp}{6})^2 -1\right)=0.$$
We get $|a-\frac{mp}{6}| = \sqrt{\frac{r}{6}}$, which is an irrational number. This contradicts that $a$ is an integer and $\frac{mp}{6}$ is a rational number.
\end{proof}

Since we get an upper bound for $p$, an easy computer search will yield all possible $p/q$'s. They are 1/1, 2/1, 3/1, 7/2, 9/1, 9/2, 10/1, 10/1, 11/1, 13/3, 13/3, 14/3, 17/2, 17/2, 19/4, 21/4, 22/3, 23/3, 27/4, 27/5, 29/4, 29/4, 37/7, 37/7, 38/7, 43/8, 46/7, 47/7, 49/9, 50/9, 51/8, 58/9, 59/9, 62/11, 69/11, 70/11, 81/13, 81/14, 83/13, 86/15, 91/16, 93/16, 94/15, 99/16, 99/17, 101/16, 106/17, 106/17, 110/19, 110/19, 113/18, 113/18, 119/19, 131/21, 133/23, 137/22, 137/22, 143/23, 146/25, 154/25, 157/27, 157/27, 163/28, 211/36, 221/36. Here if $p_i/q_i$ appears twice, this means they correspond to candidate knots with different Heegaard Floer Homologies.

\section{Berge knots with $\mathbf{T}$-,$\mathbf{O}$- and $\mathbf{I}$-type surgeries}
\begin{prop}
Only 11 hyperbolic berge knots have $\mathbf{T}$-,$\mathbf{O}$- and $\mathbf{I}$-type surgeries. More precisely, let $K(p,q; \lambda)$ be the berge knot corresponding to homology class $\lambda$ in $L(p,q)$. They are $K(18,5;5)$, $K(39,16;16)$, $K(45,19;8)$, $K(46,19;11)$, $K(68,19;5)$, $K(71,27;11)$, $K(82,23;5)$, $K(93,26;5)$, $K(107,30;5)$, $K(118,33;5)$, $K(132,37;5)$.
\end{prop}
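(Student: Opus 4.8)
The plan is to convert Proposition 5.1 into a finite search anchored by the candidate list produced in Section 4. Recall that every Berge knot is doubly primitive and is recorded by its dual knot: the core of the surgery solid torus is a simple (grid number one) knot $K(p,q;\lambda)$ sitting in the lens space $L(p,q)$, where $p$ is the doubly primitive (lens space) slope and $\lambda\in H_1(L(p,q))$ is its homology class. By the dictionary between lens space surgeries and knot Floer homology (\cite{KnotFloer,Rational,Greene}), the knot Floer homology of the Berge knot --- hence its Alexander polynomial and the torsion coefficients $t_i$ --- is computed combinatorially from the triple $(p,q,\lambda)$ via the dual simple knot. The key reduction is this: if such a Berge knot $K$ also admits an integral finite surgery $S^3_K(p')\cong\varepsilon\mathbb{T}(p'/q')$ of $\mathbf{T}$-, $\mathbf{O}$-, or $\mathbf{I}$-type, then Proposition 4.1 and the ensuing computer search force $p'$ to lie in the finite list of slopes displayed after that proof, and the correction-term identity (\ref{Delta}) forces the sequence $\{t_i\}$ of $K$ to equal the sequence $\tfrac12\Delta^\varepsilon_{a,\theta}(\cdot)$ determined by that slope.

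I would then make the matching explicit from both sides. For each admissible pair $(p',q')$ in the Section 4 list I compute the target sequence $t_i=\tfrac12\Delta^\varepsilon_{a,\theta}(i)$ from (\ref{delta_plus})--(\ref{Delta}); by (\ref{ts_nonincreasing}) this already pins down the Alexander polynomial and the Seifert genus via $t_{g}=0$. On the Berge side I run through Berge's families one at a time, writing the Alexander polynomial of $K(p,q;\lambda)$ as an explicit function of the family parameters. Because the finite surgery slope $p'$ is bounded by the list and the genus is bounded by $p'$, the doubly primitive slope $p$ and all remaining family parameters are confined to finite ranges, so only finitely many Berge knots need be tested; the matching is then a finite Diophantine problem solvable by inspection or by machine.

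Among the finitely many Berge knots whose torsion coefficients match a candidate, I would discard the torus knots (classified by Moser \cite{Moser}) and the cables of torus knots (classified by Bleiler--Hodgson \cite{Bleiler_Hodgson}, shown to exhaust the satellite case by Boyer--Zhang \cite{Boyer_Zhang}), since these are tabulated separately. For each remaining knot I confirm hyperbolicity and, crucially, verify that it genuinely carries the asserted $\mathbf{T}$-, $\mathbf{O}$-, or $\mathbf{I}$-type surgery --- not merely the matching correction terms --- by reading off the Seifert-fibered filling directly from the doubly primitive description (for instance by realizing the knots as Dean's twisted torus knots and computing the Seifert invariants of the relevant surgery, which also fixes the orientation of the resulting space form).

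The main obstacle is completeness together with the bookkeeping of the search. The genuinely delicate points are, first, computing the torsion coefficients of the dual simple knots correctly and uniformly across all of Berge's families; second, arranging the parameter ranges so that the search is provably exhaustive, so that no hyperbolic Berge knot with such a surgery is missed; and third, upgrading a correction-term match into an actual orientation-preserving homeomorphism onto $\varepsilon\mathbb{T}(p'/q')$, which requires the explicit topological surgery description rather than Floer-theoretic invariants alone. Showing that exactly these eleven knots survive --- and no others --- is where the real work lies.
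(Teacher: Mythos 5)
Your screening step coincides with the paper's: the proof does exactly what you describe first, computing the correction terms of Berge knots via Proposition \ref{prop} (they are lens space knots, so the $t_i$ are determined by $(p,q;\lambda)$ through the dual simple knot) and intersecting with the finite list of slopes from Section 4 to isolate the eleven candidates. Where you diverge is the verification that each survivor genuinely carries the asserted finite surgery. You propose to realize each knot as a Dean twisted torus knot and read off Seifert invariants of the relevant filling; the paper instead works entirely with the two-component link (unknot plus Berge knot) whose complement is shared by the knot in $S^3$ and its dual in $L(p,q)$, carefully sorts out the surgery-coefficient correspondence in homology (including the $q_1q_2\equiv\pm1 \bmod p$ reindexing ambiguity, which is why $K(107,25;5)$ becomes $K(107,30;5)$), and then has SnapPy compute $\pi_1$ of the candidate filling; finiteness and the $(2,3,n)$ type are certified by exhibiting a central element such as $(ab)^2=b^3$ and passing to the quotient. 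The paper's route has the advantage of being uniform and directly certifying the group type without needing a twisted-torus presentation of each knot, which is not obviously available for all eleven; your route, if the Dean realizations could be produced, would give more structural information (explicit Seifert invariants and orientations) but leaves that realization step as unaddressed work. Your remark that upgrading a correction-term match to an actual homeomorphism requires topological input beyond Floer theory is exactly the point the paper's fundamental-group computation is designed to settle.
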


\begin{proof}
Berge knots have lens space surgeries, we can compute their Heegaard Floer Homology using Proposition \ref{prop} and compare them with the list from finite surgeries, we get 11 candidates. We draw the link diagram in SnapPy and compute the fundamental group of the Dehn filling with the finite surgery coefficient. If what we get is a $(2,3,n)$-type group, then we have verified that the knot has indeed a required finite surgery. Below is a table of the candidates.

\begin{table}[ht]
\caption{Canditaes of Berge knots with $\mathbf{T}$-,$\mathbf{O}$- and $\mathbf{I}$-type surgeries}  
\centering    

\begin{tabular}{|c| c| c| c|}  
\hline  
 p  & q  & $\lambda$ & finite surgery coefficient p'\\ [0.5ex] 
\hline 
18 & 5 & 5 & 17 \\\hline
39 & 16 & 16 & 38 \\\hline 
45 & 19 & 8 & 46 \\\hline
46 & 17 & 11 & 47 \\\hline
68 & 19 & 5 & 69 \\\hline
71 & 21 & 11 & 70 \\\hline
82 & 23 & 5 & 81 \\\hline
93 & 25 & 5 & 94 \\\hline
107 & 25 & 5 & 106 \\\hline
118 & 25 & 5 & 119 \\\hline
132 & 25 & 5 & 131 \\ 
\hline   
\end{tabular}
\label{table:candidates} 
\end{table}

Here we use a point of view of dual berge knots in the corresponding lens space, as they have the same knot complement as berge knots in $S^3$, the only thing is to figure out the corresponding coefficients. The computation is as follows. 

\begin{figure}[h]
        \centering
\includegraphics[width=0.8\textwidth]{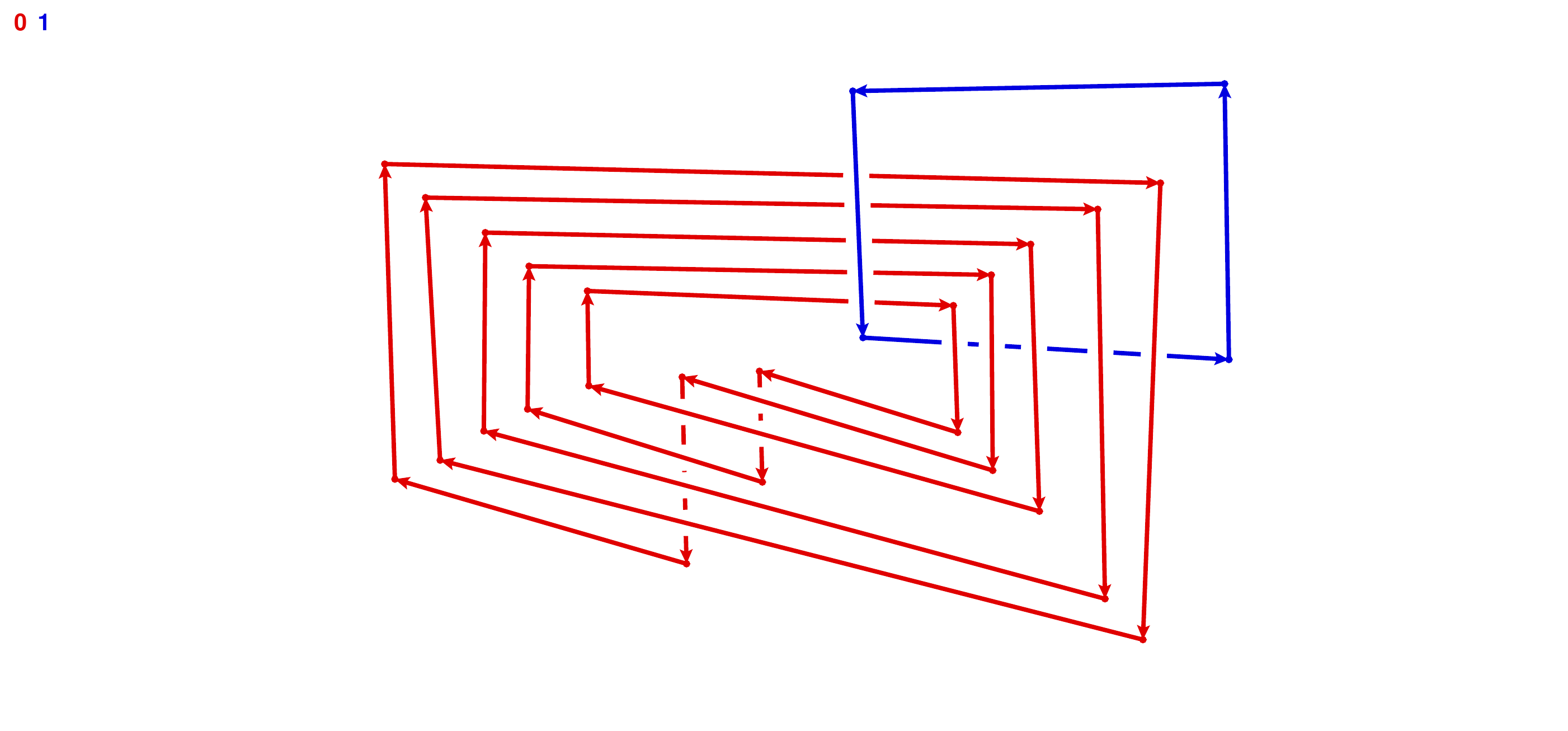}
\caption{Link for $K(107,30;5)$}
\end{figure}

We would like to draw a link $L$ with two components as above, and denote the link complement by $M$.  When we perform on the trivial component ($1$-component) $p/q$-surgery, we get $L(p,q)$. The other component then becomes the dual berge knot in $L(p,q)$. We would like to choose the orientations of two components consisistently, one choice is shown above, and the other choice is to reverse the orientations of both two components. Denote the longitude and meridian of $0$-component by $\lambda$ and $\mu$, and those of $1$-component by $l$ and $m$. In homology, we have $w\mu=l$ and $\lambda=wm$. After performing $p/q$-surgery on $1$-component, we have $pm+ql=0$ in homology and we denote the longitude and meridian of resultant $0$-component $K'$ by $\lambda'$ and $\mu'$. We have $qw^2\mu+p\lambda=qwl+pwm=0$, this means that $\lambda'=(qw^2, p)$. By performing $p$-surgery on $K'$ we get $L(p,q)$ (this corresponds to performing $\infty$-surgery on $0$-component of $L$), and by performing $\infty$-surgery on $K'$ we get $S^3$, and we hope to get a spherical space form by performing $p'$-surgery on $K'$. We have $\lambda'=qw^2\mu+p\lambda=(qw^2,p)$, and $p\mu'+\lambda'=(\pm 1, 0)$.  Note that $L(p,q_1) \cong L(p, q_2)$ iff $q_1q_2\equiv \pm 1 \, \text{mod} \, p$, so there is an indetermincy in $q$. Take $K(107,25;5)$ for example. We have  $\lambda'=(25*5^2,107)$, and $107\mu'+\lambda'=(\pm 1, 0)$. There is no integer solutions. Instead we take $q=30$, as $\text{mod}(30*25,107)=1$, denote the knot by $K(107,30;5)$ from now on. We thus have $\lambda'=(30*5^2,107)$, $\mu'=(-7,-1)$, and $107\mu'+\lambda'=(1, 0)$. $ M(1/0,107/30)$ is $L(107,30)$, and $ M(7/1,107/30)$ is $S^3$. And $M(8/1,107/30)$ has fundamental group of the form  $<a,b|abab^{-2},ab(b^3a^2)^{77}b^4ab((a^{-2}b^{-3})^3a^{-1})^{21}a^{-1}b>$. The first relator is $(ab)^2=b^3$, which is in the center. Mod it out, the other relator becomes $a^4=1$. By upper central series theory, this group is finite and is of type $(2,3,4)$, which means this dehn filling is indeed the required finite surgery.

\begin{table}[ht]
\caption{Berge knots with $\mathbf{T}$-,$\mathbf{O}$- and $\mathbf{I}$-type surgeries}  
\centering    

\begin{tabular}{|c| c| c| c| c| c|}  
\hline 
p  &q  &$\lambda$ & finite surgery coefficient $p'$ &$Z(\pi_1(S^3_K(p')))$ & $\pi_1(S^3_K(p'))/Z(\pi_1(S^3_K(p')))$ \\ [0.5ex] 
\hline  
18 & 5 & 5 & 17 &  $\langle (ab)^2\rangle$ & $\langle a,b|(ab)^2=b^3=a^5=1\rangle$\\  \hline
39 & 16 & 16 & 38 & $\langle a^4\rangle$ & $\langle a,b|b^2=(a^{-1}b)^3=a^4=1\rangle$\\  \hline
45 & 19 & 8 & 46 & $\langle (ab)^2\rangle$ & $\langle a,b|(ab)^2=a^3=b^4=1\rangle$\\  \hline
46 & 17 & 11 & 47 & $\langle b^3\rangle$ & $\langle (b^2a^{-2})^2 = b^3 =a^5 =1 \rangle$\\  \hline
68 & 19 & 5 & 69  & $\langle (ab)^2\rangle$ & $\langle a,b|(ab)^2=b^3=a^3=1\rangle$ \\  \hline
71 & 27 & 11 & 70  & $\langle a^3\rangle$ & $\langle a,b|b^2=a^3=(ba^2)^4=1\rangle$\\  \hline
82 & 23 & 5 & 81  & $\langle (ab)^2\rangle$ & $\langle a,b|(ab)^2=b^3=a^3=1\rangle$\\  \hline
93 & 26 & 5 & 94  & $\langle (ab)^2\rangle$ & $\langle a,b|(ab)^2=b^3=a^4=1\rangle$\\  \hline
107 & 30 & 5 & 106 & $\langle (ab)^2\rangle$ & $\langle a,b|(ab)^2=b^3=a^4=1\rangle$ \\  \hline
118 & 33 & 5 & 119  & $\langle (ab)^2\rangle$ & $\langle a,b|(ab)^2=b^3=a^5=1\rangle$\\  \hline
132 & 37 & 5 & 131  & $\langle (ab)^2\rangle$ & $\langle a,b|(ab)^2=b^3=a^5=1\rangle$\\ 
\hline   
\end{tabular}
\label{table:berge knots} 
\end{table}
\end{proof}

\section{Summary}
I summarize all results below with a table of torus knots and satellite knots and a table of hyperbolic knots. Let $T(p,q)$ be the $(p,q)$ torus knot, and let $[p_1,q_1; p_2, q_2]$ denote the $(p_1, q_1)$ cable of $T(p_2, q_2)$. I have also drawed all hyperbolic knots below using Mathematica package KnotTheory\hspace{0.5 mm}$\grave{}$.

\begin{table}[ht]
\centering

\captionof{table}{torus knots and satellite knots with $\mathbf{T}$-,$\mathbf{O}$- and $\mathbf{I}$-type surgeries}  

\begin{tabular}{|c |c |c |c |c |c|c|c|c|c|}  
\hline  
p & knot & p & knot & p & knot &p & knot & p & knot\\ [0.5ex] 
\hline  
1 & $T(3,2)$ & 2 & $T(3,2)$ &3 & $T(3,2)$ & 7 & $T(5,2)$ & 9 & $T(3,2)$\\ \hline
10 & $T(3,2)$ & 10 & $T(4,3)$ & 11 & $T(3,2)$ & 13 & $T(5,3)$ & 13 & $T(5,2)$\\ \hline
14 & $T(4,3)$ & 17 & $T(5,3)$ & 19 & $[9,2;3,2]$ & 21 & $[11,2;3,2]$& 27 & $[13,2;3,2]$\\ \hline
29 & $[15,2;3,2]$ & 37 & $[19,2;5,2]$ & 43 & $[21,2;5,2]$ & 49 & $[16,3;3,2]$ & 50 & $[17,3;3,2]$\\ \hline
59 & $[20,3;3,2]$ & 91 & $[23,4;3,2]$ & 93 & $[23,4;3,2]$ & 99 & $[25,4;3,2]$ & 101 & $[25,4;3,2]$\\ \hline
106 & $[35,3;4,3]$ & 110 & $[37,3;4,3]$ & 133 & $[44,3;5,3]$ & 137 & $[46,3;5,3]$ & 146 & $[29,5;3,2]$\\ \hline
154 & $[31,5;3,2]$& 157 & $[39,4;5,2]$ & 163 & $[41,4;5,2]$ & 211 & $[35,6;3,2]$ & 221 & $[37,6;3,2]$\\ 
\hline   
\end{tabular}
\label{table: torus knots and satellite knots} 

\captionof{table}{hyperbolic knots with $\mathbf{T}$-,$\mathbf{O}$- and $\mathbf{I}$-type surgeries}  

\begin{tabular}{|c |c| c| c| c| c|}  
\hline 
p & knot \\ [0.5ex] 
\hline  
17 & Preztel knot $P(-2,3,7)$\\  \hline 
22 & Preztel knot $P(-2,3,9)$\\  \hline
23 & Preztel knot $P(-2,3,9)$\\  \hline
29 & mirror image of $K(1,1,0)$ from section 4  of \cite{Eudave_Munoz}\\  \hline
37 & $K(11,3,2,1,1)$ from \cite{Dean}\\  \hline
38 & Berge knot $K(39,16; 16)$\\  \hline
46 & Berge knot $K(45,19; 8)$\\  \hline
47 & $K_3^*$ from \cite{Boyer_Zhang}\\  \hline
51 & $K_2^\sharp$ from \cite{Boyer_Zhang}\\  \hline
58 &  mirror image of the P/SF$_\text{d}$ KIST IV knot with $(n,p,\epsilon, J_1, J_2)=(-2,1,1,-4,1)$ from \cite{Berge2}\\  \hline
62 & P/SF$_\text{d}$ KIST III knot with $(h,k,h', k', J)=(-5,-3,-2,-1,1)$ from \cite{Berge2}\\  \hline
69 & mirror image of $K(2,3,5,1,-3)$ from \cite{Miyazaki_Motegi}\\  \hline
70 & Berge knot $K(71,27;11)$\\  \hline
81 & $K(2,3,5,1,3)$ from \cite{Miyazaki_Motegi}\\  \hline
83 & P/SF$_\text{d}$ KIST V knot with $(n,p,\epsilon, J_1, J_2)=(1,-2,-1,2,2)$ from \cite{Berge2}\\  \hline
86 & mirror image of $K(3,4,7,1,-2)$ from \cite{Miyazaki_Motegi}\\  \hline
94 & mirror image of $K(2,3,5,1,-4)$ from \cite{Miyazaki_Motegi}\\  \hline
106 & $K(2,3,5,1,4)$ from \cite{Miyazaki_Motegi}\\  \hline
110 & $K(3,4,7,1,2)$ from \cite{Miyazaki_Motegi}\\  \hline
113 & mirror image of $K(3,5,8,1,-2)$ from \cite{Miyazaki_Motegi}\\  \hline
113 & mirror image of the P/SF$_\text{d}$ KIST V knot with $(n,p,\epsilon, J_1, J_2)=(-3,-2,-1,2,2)$ from \cite{Berge2}\\  \hline
119 & mirror image of $K(2,3,5,1,-5)$ from \cite{Miyazaki_Motegi}\\  \hline
131 & $K(2,3,5,1,5)$ from \cite{Miyazaki_Motegi}\\  \hline
137 & mirror image of $K(2,5,7,1,-3)$ from \cite{Miyazaki_Motegi}\\  \hline
143 & $K(3,5,8,1,2)$ from \cite{Miyazaki_Motegi}\\  \hline
157 & $K(2,5,7,1,3)$ from \cite{Miyazaki_Motegi}\\
\hline   
\end{tabular}
\label{table: hyperbolic knots} 
\end{table}
\FloatBarrier

\begin{figure}[h]
        \centering

\begin{subfigure}[b]{0.5\textwidth}
	\centering
           \includegraphics[width=\textwidth]{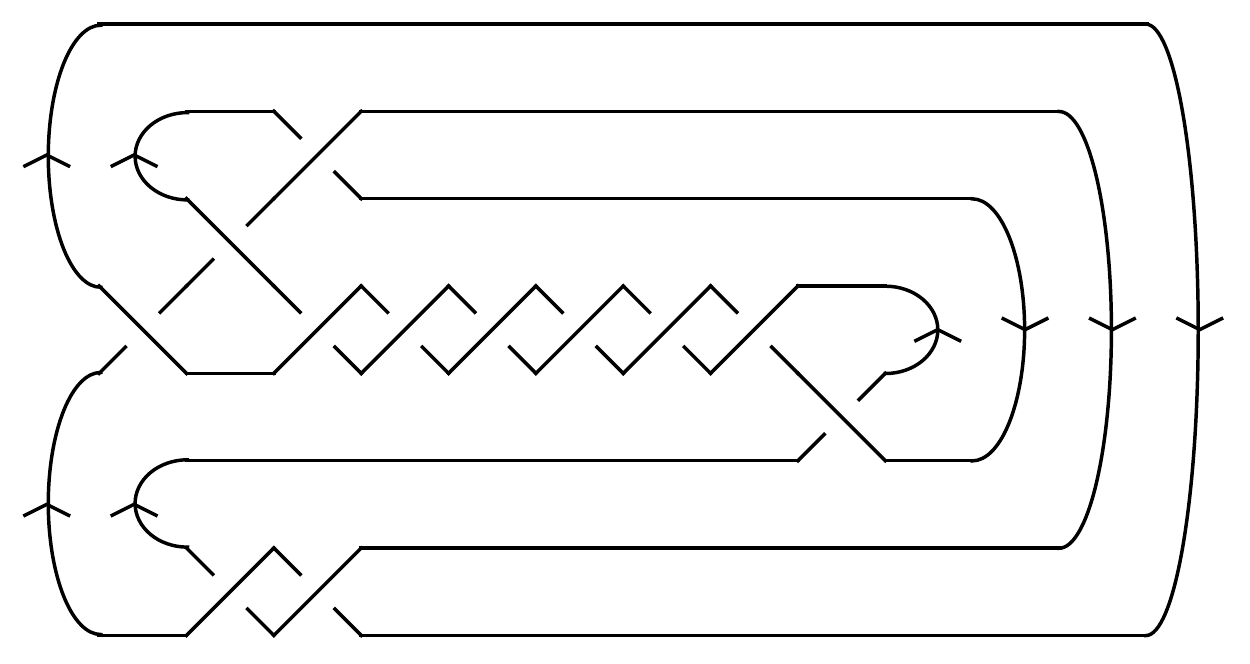}
           \caption*{17}
\end{subfigure}%
      ~
\begin{subfigure}[b]{0.5\textwidth}
	\centering
           \includegraphics[width=\textwidth]{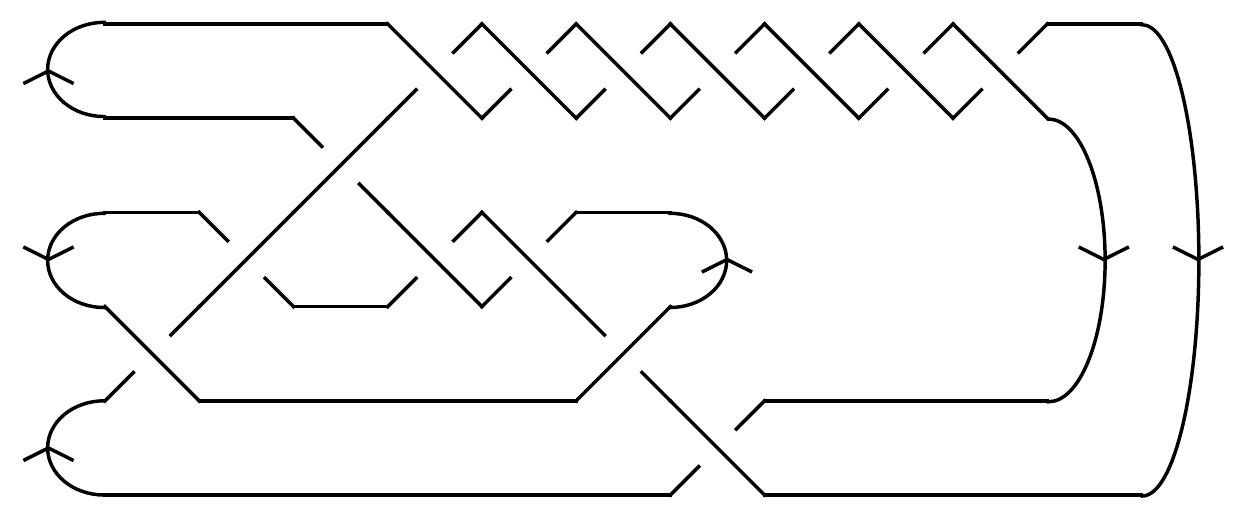}
           \caption*{22,23}
\end{subfigure}

\begin{subfigure}[b]{0.5\textwidth}
	\centering
           \includegraphics[width=\textwidth]{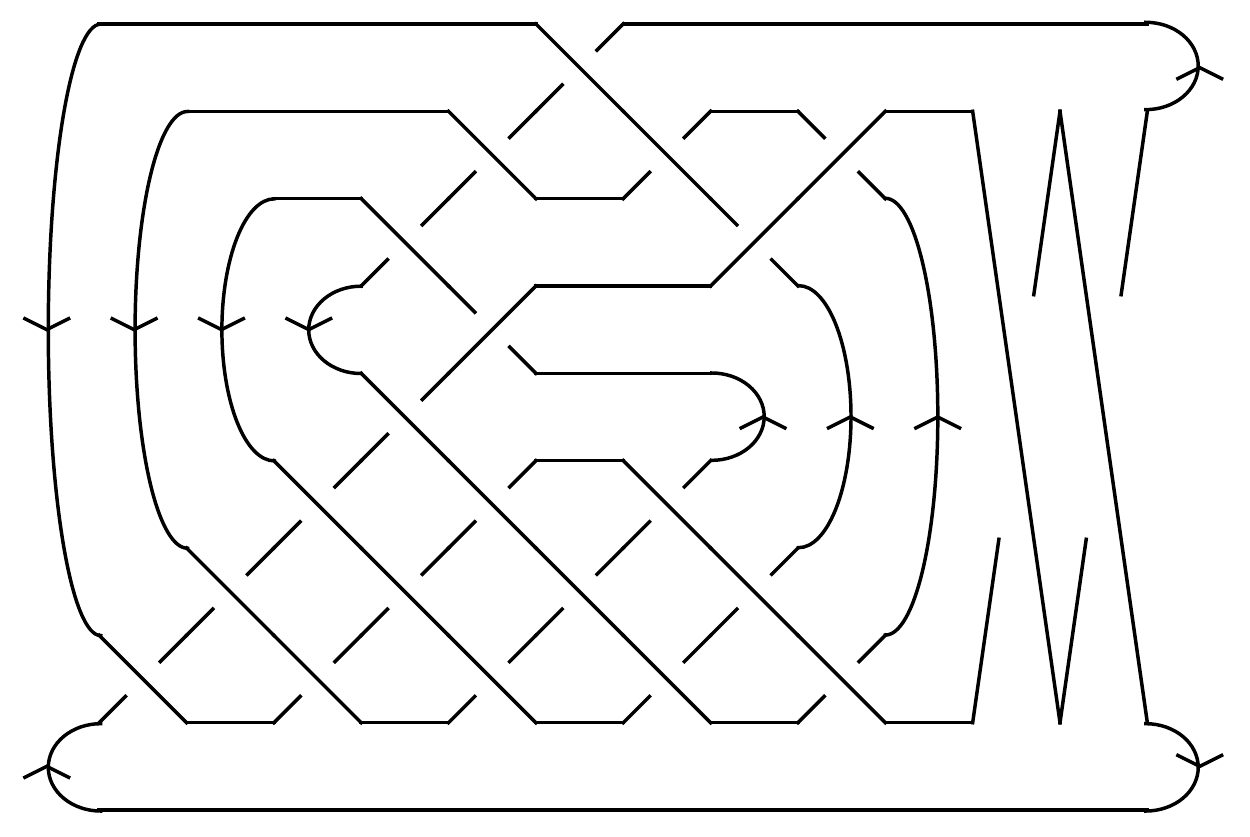}
           \caption*{29}
\end{subfigure}
       ~
\begin{subfigure}[b]{0.5\textwidth}
	\centering
           \includegraphics[width=\textwidth]{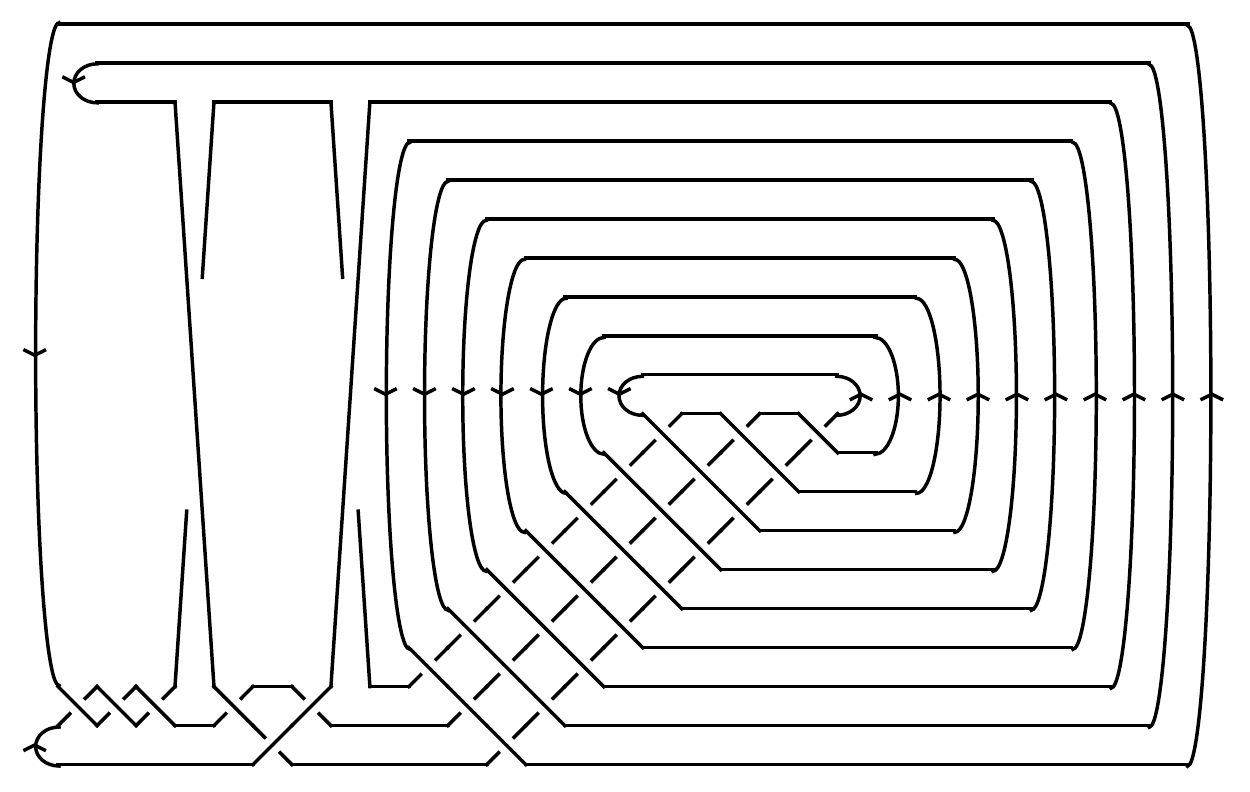}
           \caption*{37}
\end{subfigure}

\begin{subfigure}[b]{0.5\textwidth}
	\centering
           \includegraphics[width=\textwidth]{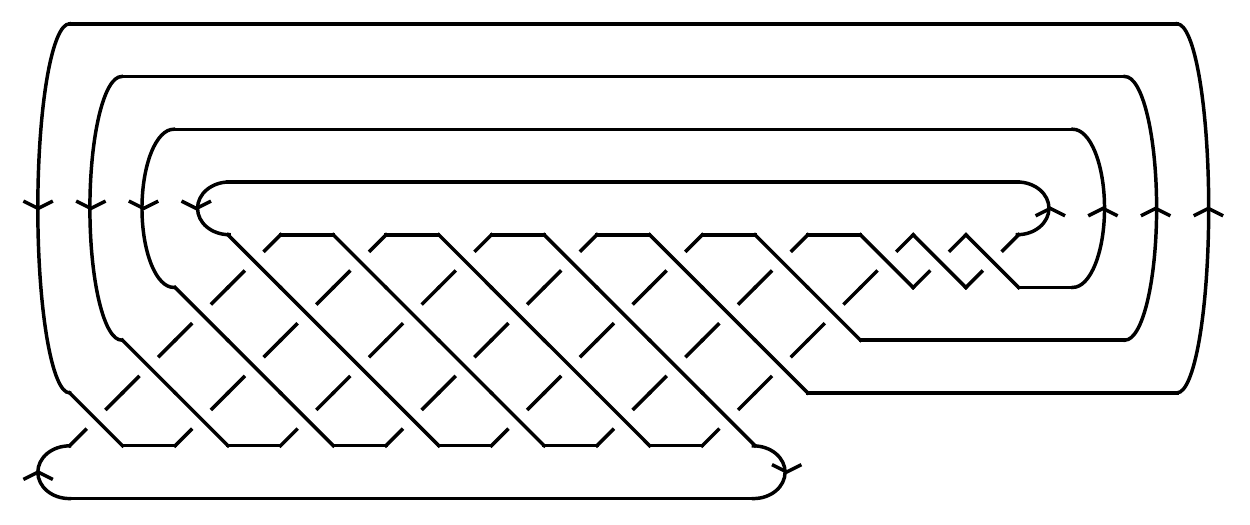}
           \caption*{38}
\end{subfigure}%
      ~
\begin{subfigure}[b]{0.5\textwidth}
	\centering
           \includegraphics[width=\textwidth]{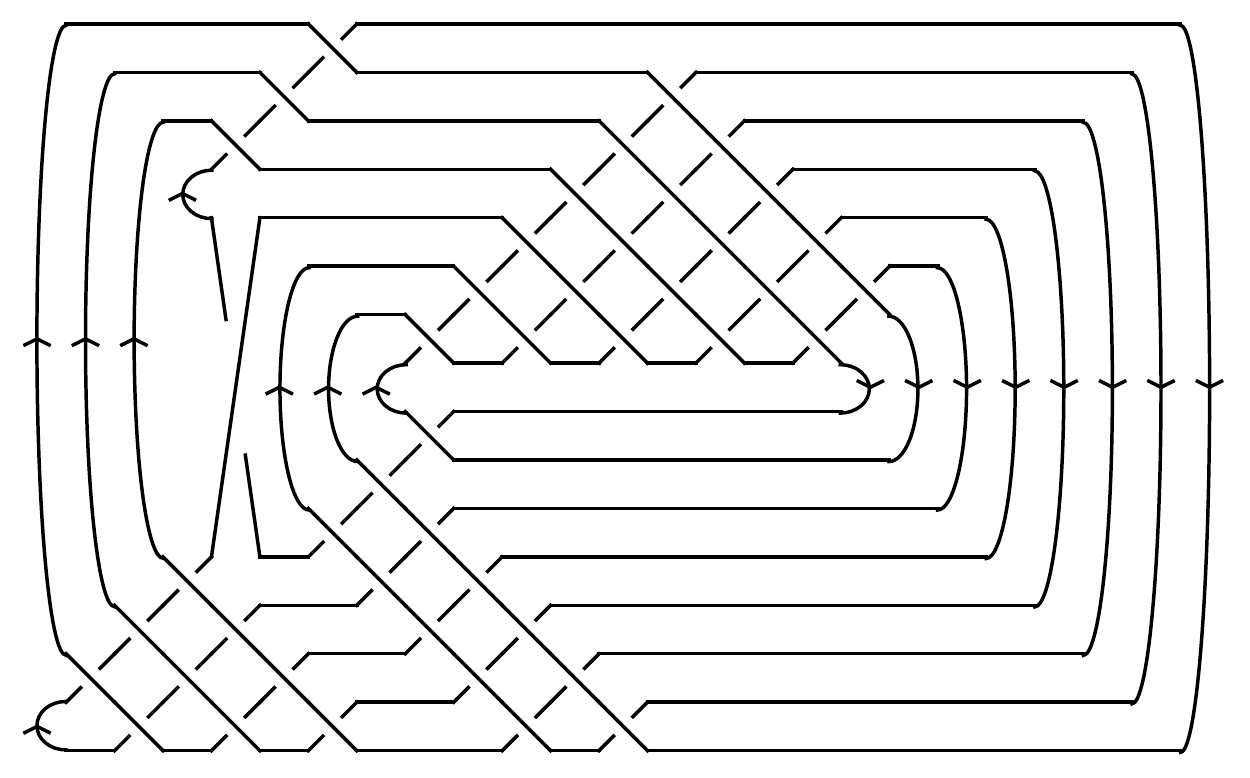}
           \caption*{46}
\end{subfigure}

\begin{subfigure}[b]{0.5\textwidth}
	\centering
           \includegraphics[width=\textwidth]{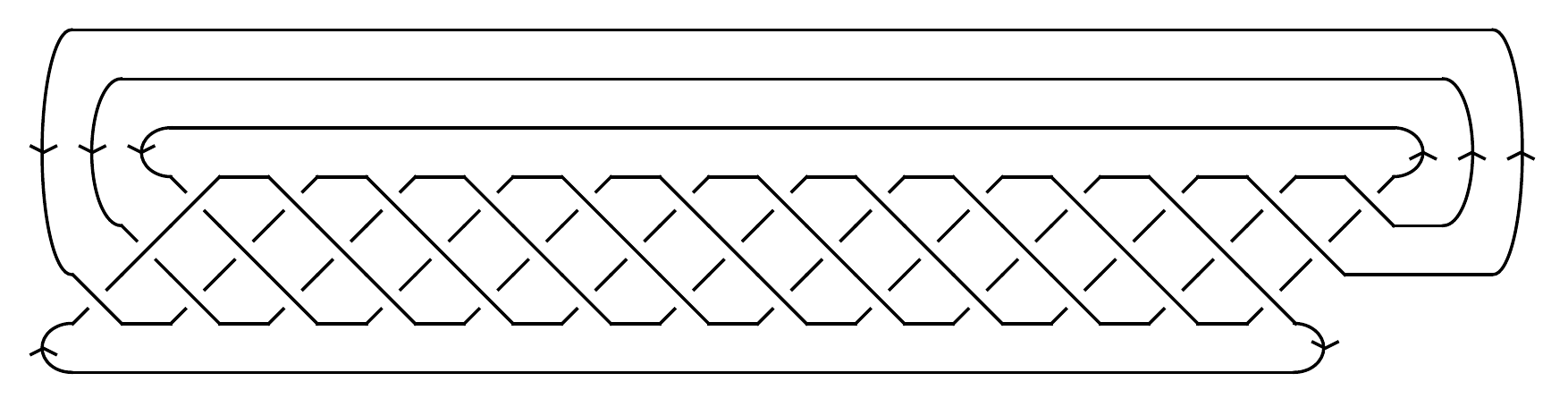}
           \caption*{47}
\end{subfigure}
       ~
\begin{subfigure}[b]{0.5\textwidth}
	\centering
           \includegraphics[width=\textwidth]{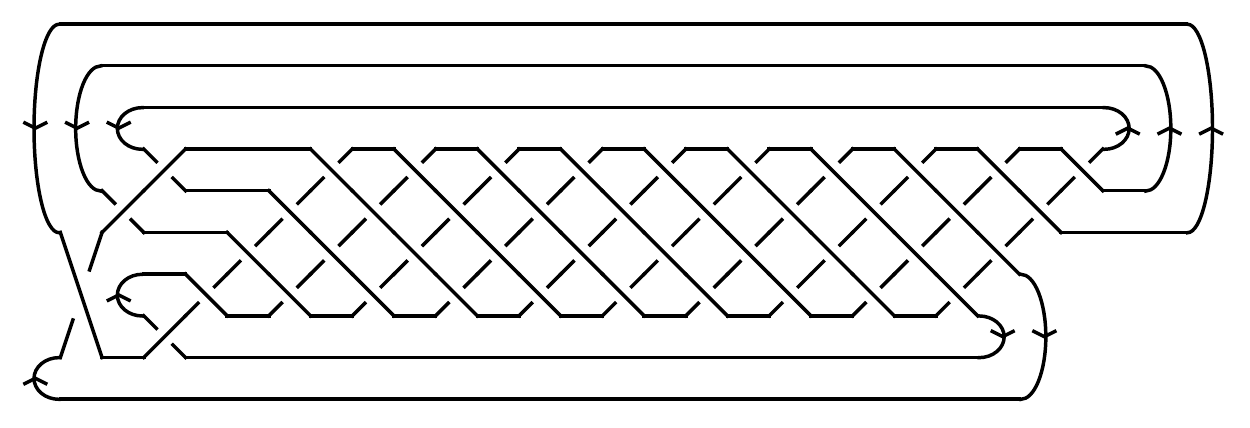}
           \caption*{51}
\end{subfigure}
   
\begin{subfigure}[b]{0.5\textwidth}
	\centering
           \includegraphics[width=\textwidth]{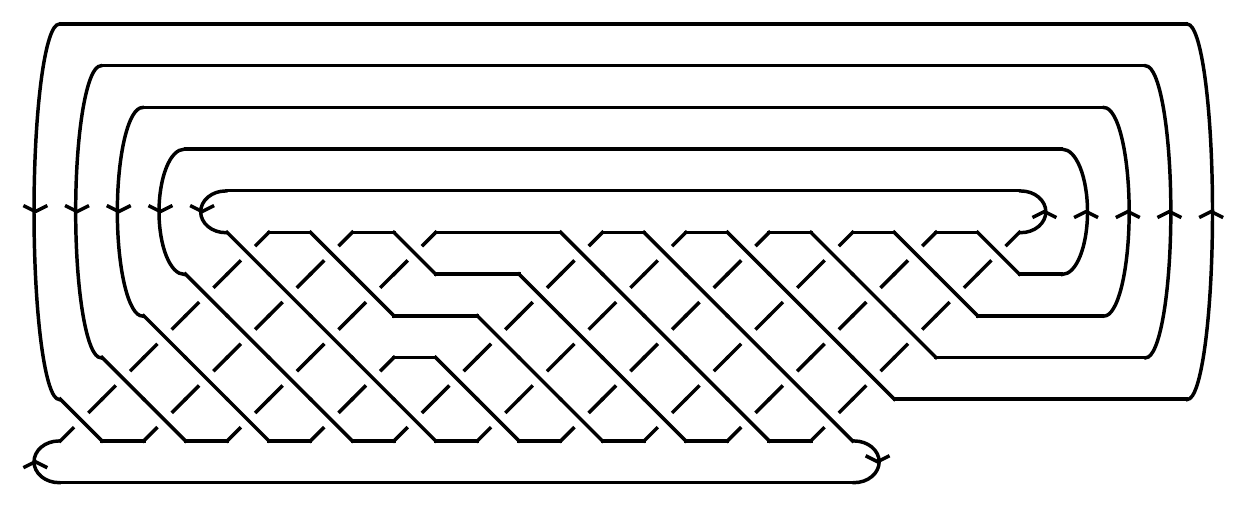}
           \caption*{58}
\end{subfigure}%
      ~
\begin{subfigure}[b]{0.5\textwidth}
	\centering
           \includegraphics[width=\textwidth]{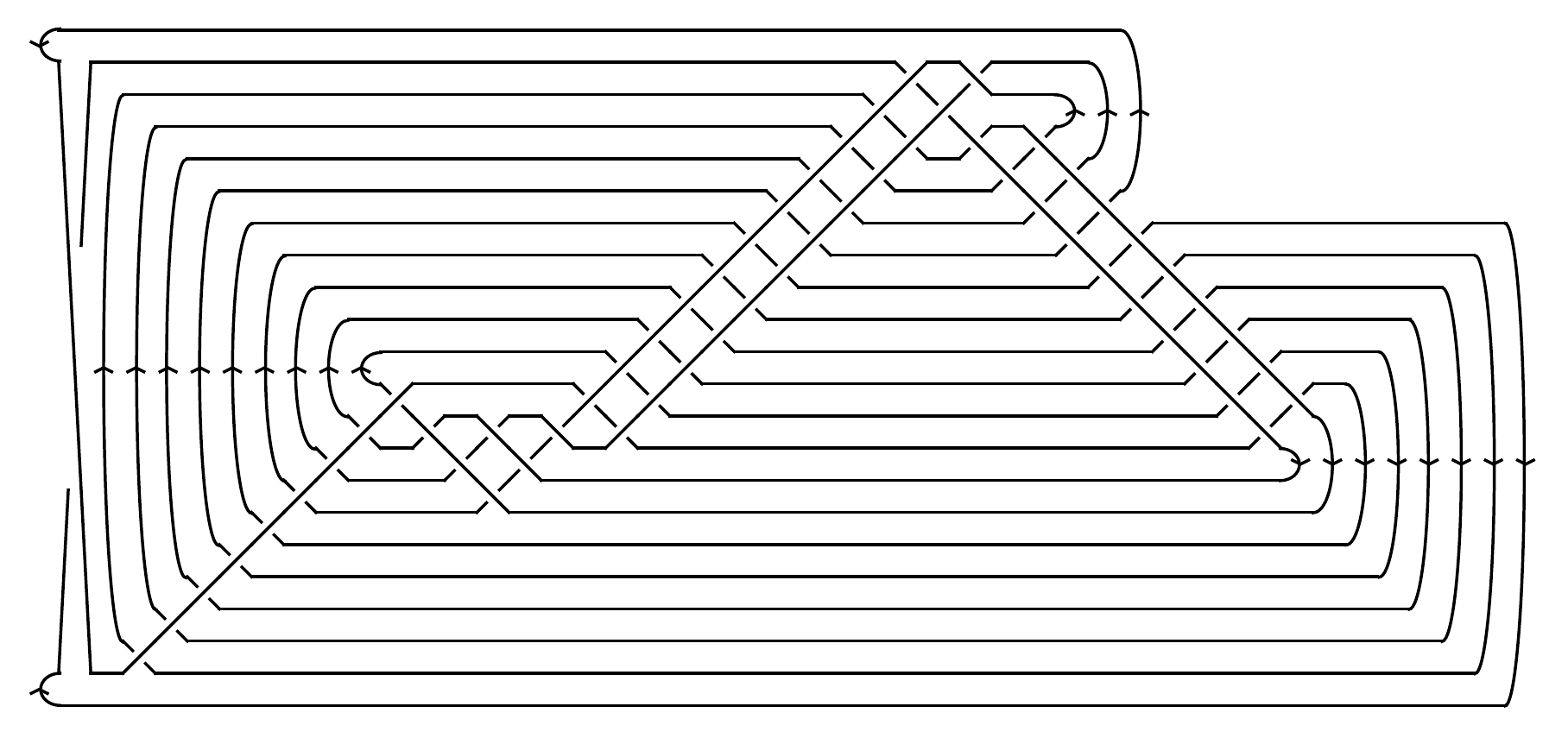}
           \caption*{62}
\end{subfigure}

\begin{subfigure}[b]{0.5\textwidth}
          \centering
          \includegraphics[width=\textwidth]{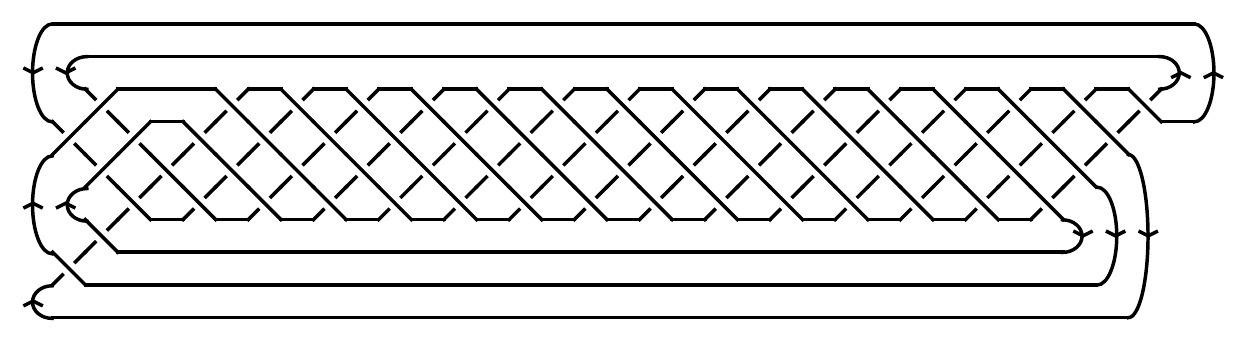}
          \caption*{69}
\end{subfigure}
~
\begin{subfigure}[b]{0.5\textwidth}
         \centering
         \includegraphics[width=\textwidth]{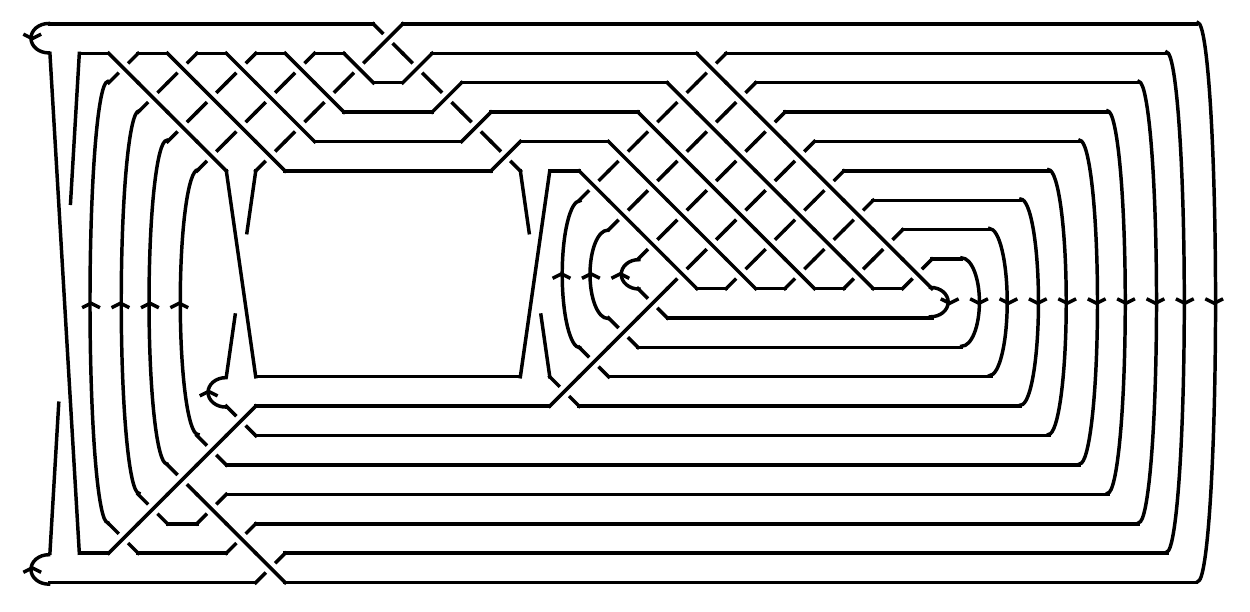}
         \caption*{70}
\end{subfigure}

\end{figure}

\begin{figure}[h]
        \centering

\begin{subfigure}[b]{0.8\textwidth}
          \centering
          \includegraphics[width=\textwidth]{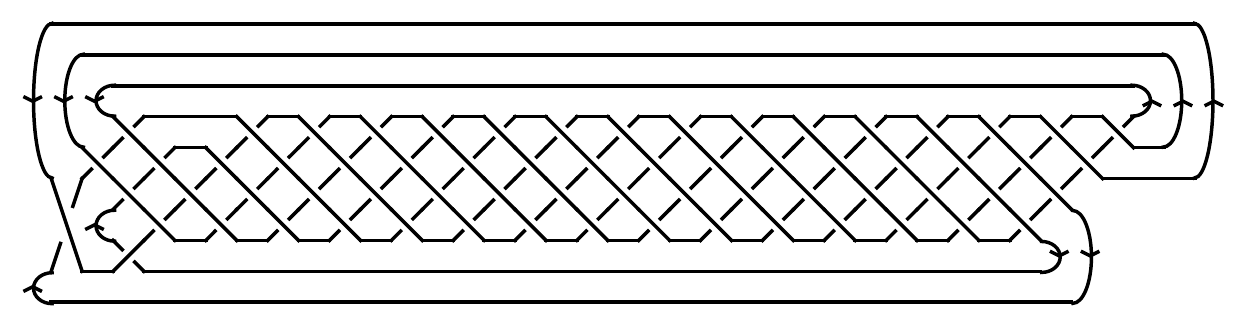}
          \caption*{81}
\end{subfigure}

\begin{subfigure}[b]{0.8\textwidth}
          \centering
          \includegraphics[width=\textwidth]{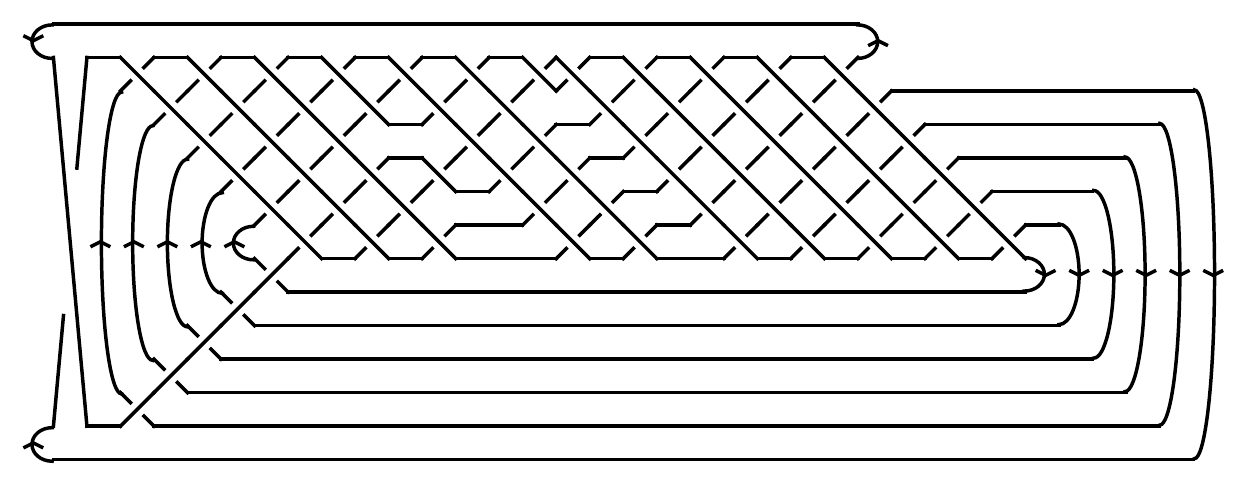}
          \caption*{83}
\end{subfigure}

 \begin{subfigure}[b]{0.8\textwidth}
           \centering
           \includegraphics[width=\textwidth]{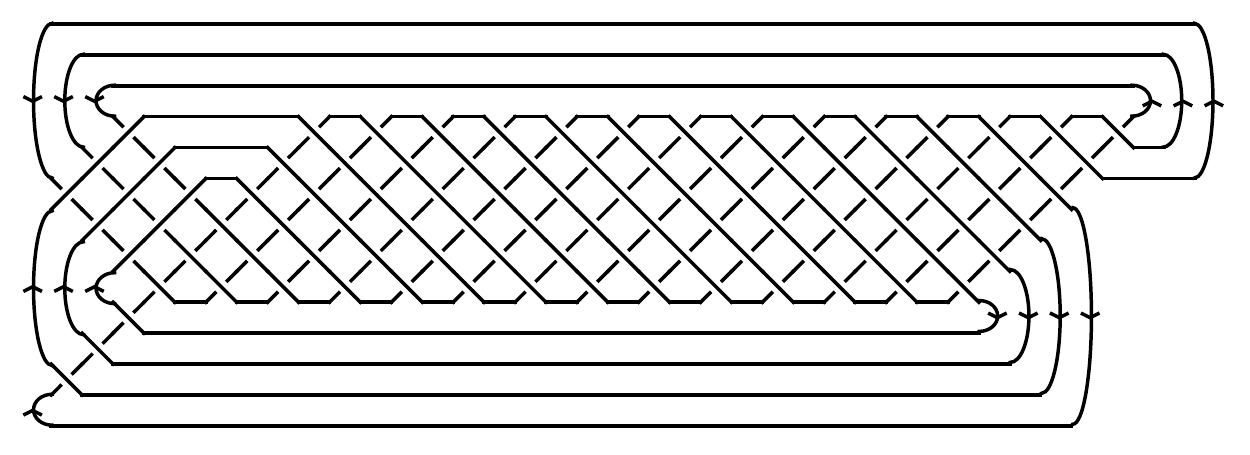}
           \caption*{86}
\end{subfigure}

\begin{subfigure}[b]{0.8\textwidth}
         \centering
         \includegraphics[width=\textwidth]{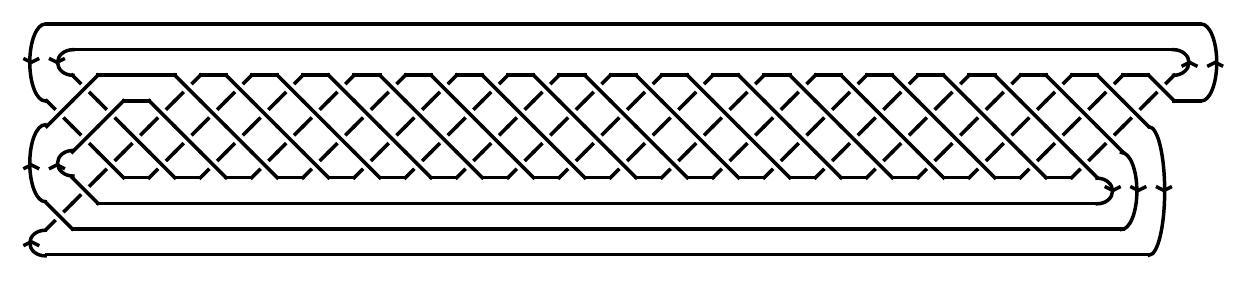}
         \caption*{94}
\end{subfigure}

\begin{subfigure}[b]{0.8\textwidth}
	\centering
           \includegraphics[width=\textwidth]{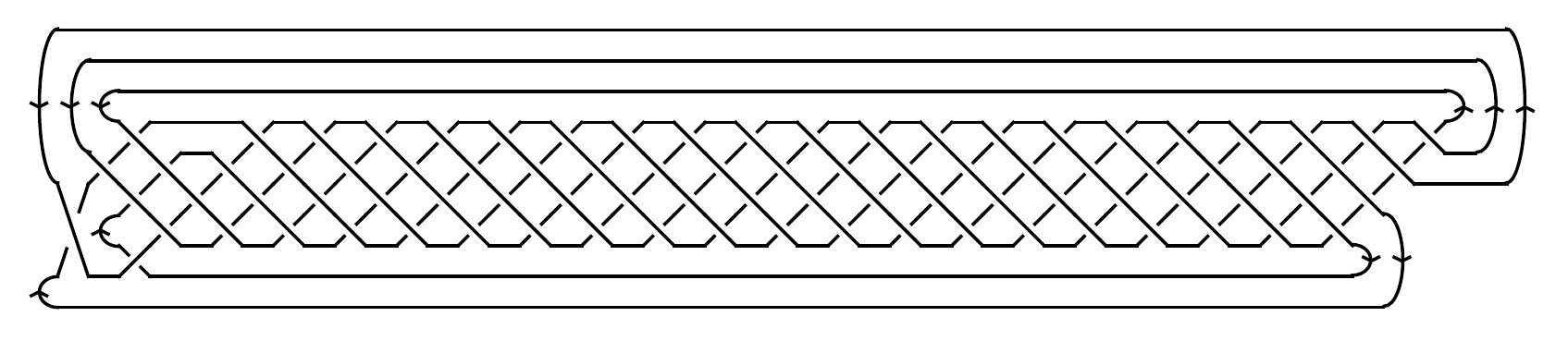}
           \caption*{106}
\end{subfigure}

\begin{subfigure}[b]{0.8\textwidth}
	\centering
           \includegraphics[width=\textwidth]{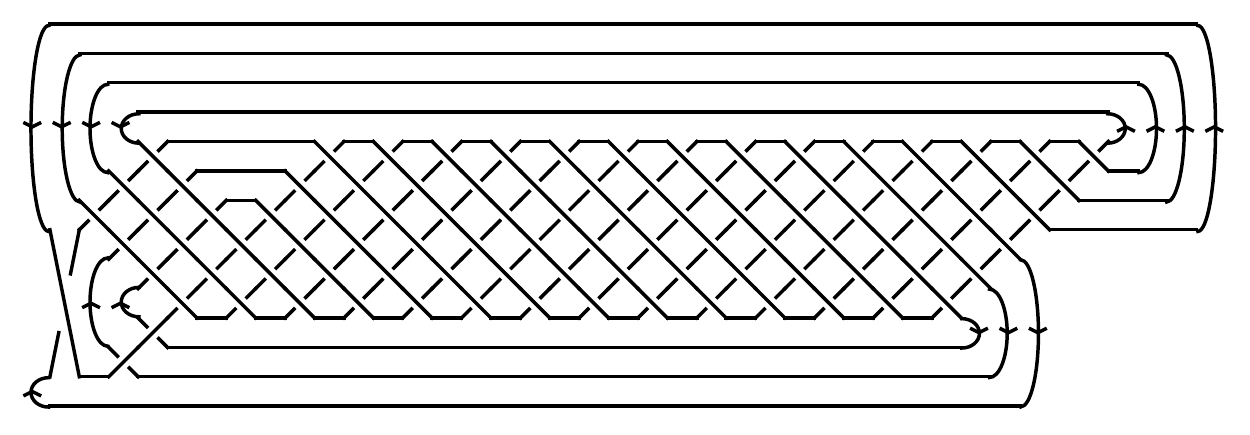}
           \caption*{110}
\end{subfigure}

\end{figure}

\begin{figure}[h]
	\centering
      
\begin{subfigure}[b]{0.8\textwidth}
	\centering
           \includegraphics[width=\textwidth]{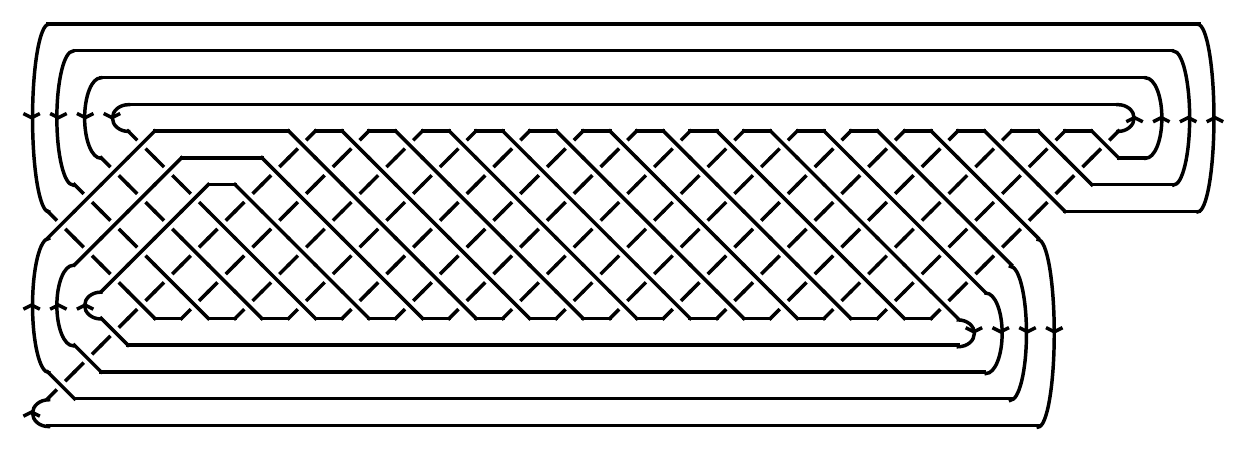}
           \caption*{113}
\end{subfigure}
  
\begin{subfigure}[b]{0.8\textwidth}
           \centering
           \includegraphics[width=\textwidth]{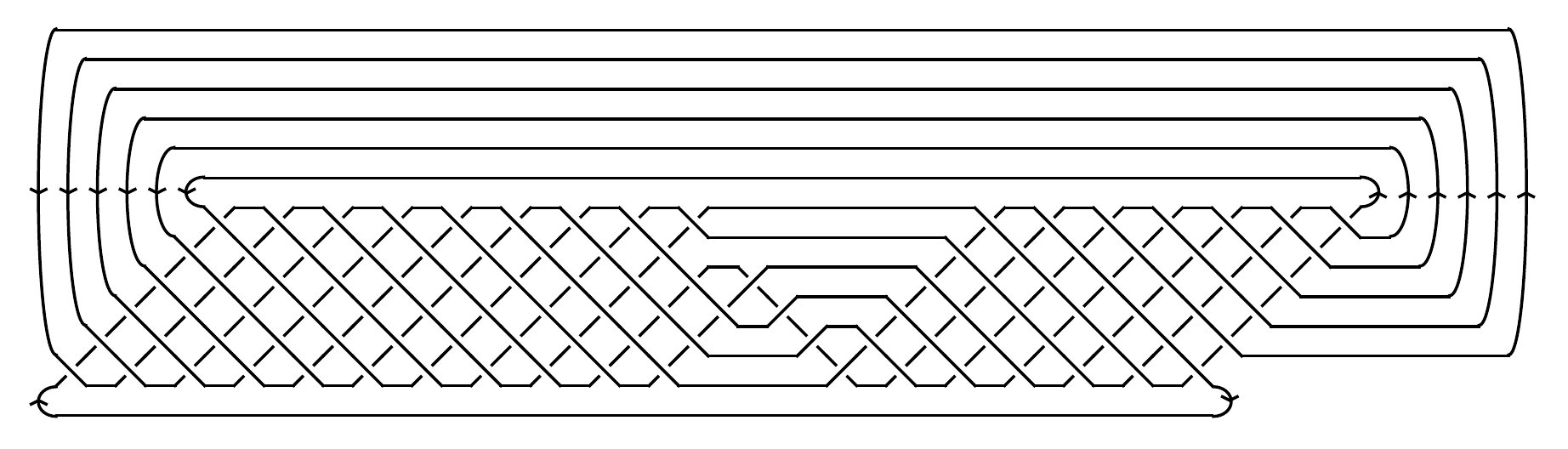}
           \caption*{113}
\end{subfigure}

 \begin{subfigure}[b]{0.8\textwidth}
          \centering
          \includegraphics[width=\textwidth]{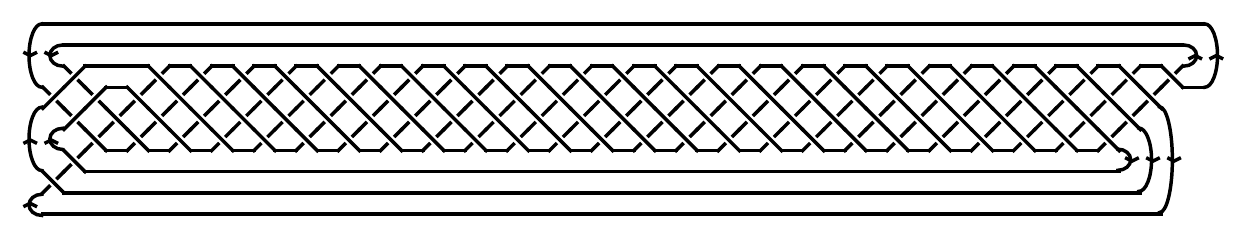}
          \caption*{119}
\end{subfigure}

 \begin{subfigure}[b]{0.8\textwidth}
         \centering
         \includegraphics[width=\textwidth]{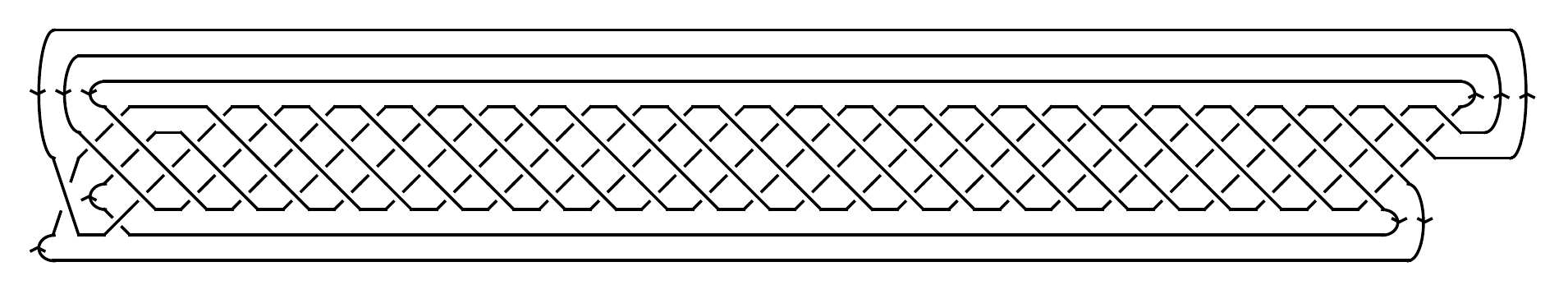}
         \caption*{131}
\end{subfigure}

   \begin{subfigure}[b]{0.8\textwidth}
          \centering
          \includegraphics[width=\textwidth]{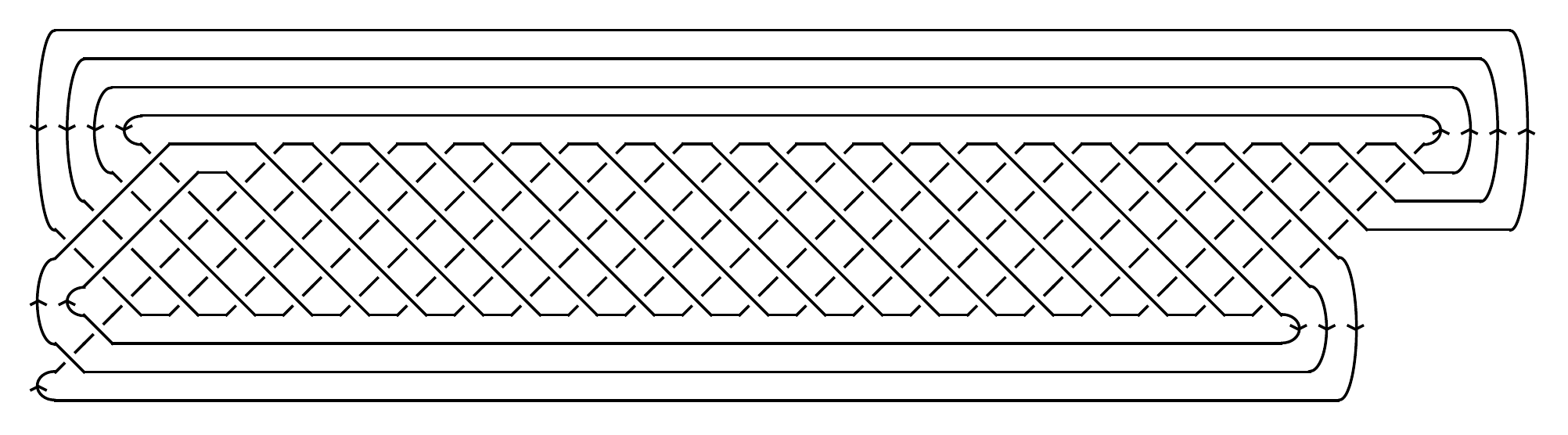}
          \caption*{137}
\end{subfigure}
  
\begin{subfigure}[b]{0.8\textwidth}
          \centering
          \includegraphics[width=\textwidth]{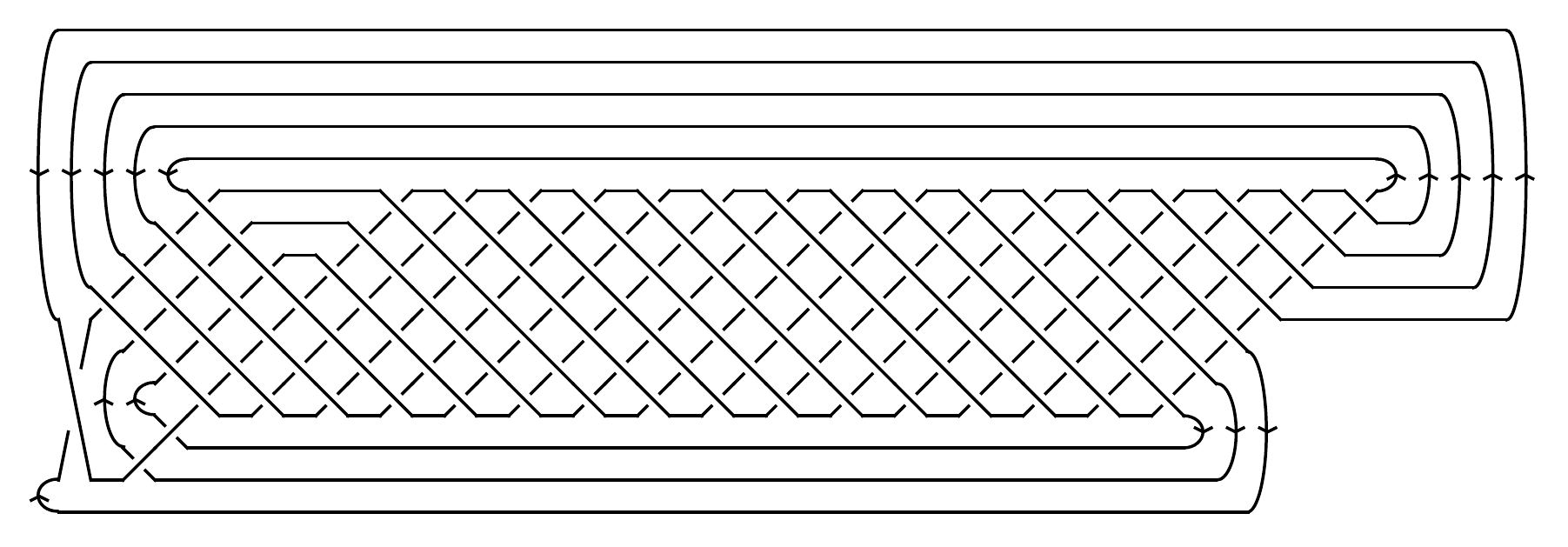}
          \caption*{143}
\end{subfigure}

 \begin{subfigure}[b]{0.8\textwidth}
         \centering
         \includegraphics[width=\textwidth]{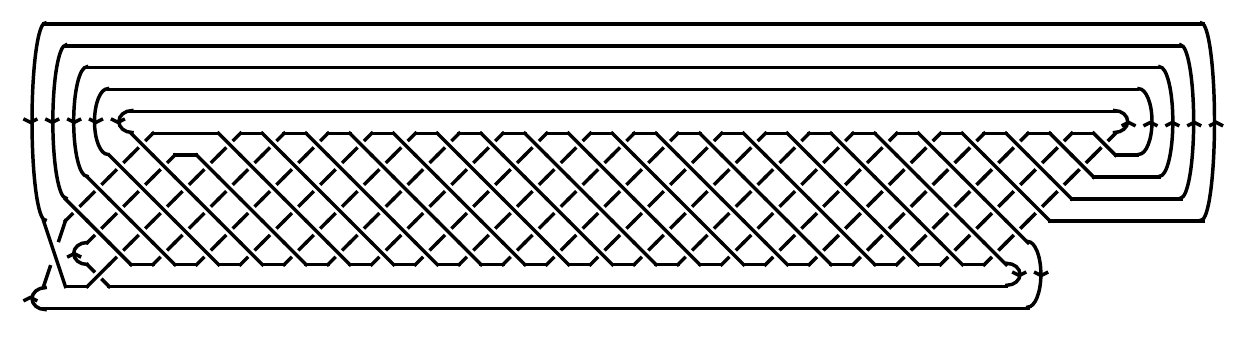}
         \caption*{157}
\end{subfigure} 
 
\end{figure}
\FloatBarrier

\end{document}